\documentclass{book}

\usepackage{makeidx}
\usepackage{amsthm,amsmath,amssymb,amsfonts}
\usepackage{setspace,graphicx}
\usepackage{Generic}  
\usepackage[sort,longnamesfirst]{natbib}
\usepackage{url} 
\usepackage{subcaption}
\usepackage{hyperref}
\usepackage{enumerate}


\numberwithin{equation}{section}
\theoremstyle{plain}
\newtheorem{theorem}{Theorem}[section]


\newcommand{\df}{\mathrm{d}}
\newcommand{\X}{\mathsf{X}}

\newcommand{\B}{\mathcal{B}}

\newcommand{\F}{\mathcal{F}}

\newcommand{\ind}{\mathbf{1}}
\newcommand{\tv}{\mbox{\scriptsize TV}}

\newcommand{\dist}{\psi}
\newcommand{\Dist}{D}

\newtheorem{remark}{Remark}[section]
\newtheorem{corollary}{Corollary}[section]
\newtheorem{example}{Example}[section]


\begin{document}
\doublespacing
%


\chapter[Convergence Bounds]{Convergence Bounds for Monte Carlo Markov Chains} \label{sec: label}
\begin{center}
\begin{large}
{\em Qian Qin }
\end{large}
\end{center}

\section{Introduction} \label{sec:intro}

When implementing a Markov chain Monte Carlo (MCMC) algorithm, one simulates a Markov chain $(X_t)_{t=0}^{\infty}$ such that the distribution of $X_t$ approaches a target distribution $\varpi(\cdot)$ as $t$ increases.
Inference about the target distribution is then conducted based on a Monte Carlo sample, which is a finite portion of the chain $(X_t)_{t=\underline{n}}^{\overline{n}}$.
The integer $\underline{n}$ is the amount of burn-in, while the integer $\overline{n}$ is the time at which the simulation is terminated.
For the Monte Carlo sample $(X_t)_{t=\underline{n}}^{\overline{n}}$ to be representative of the target distribution, the distribution of most of the $X_t$'s, $\underline{n} \leq t \leq \overline{n}$, should be similar to $\varpi(\cdot)$.
To be certain of this, one would need to know how fast the distribution of $X_t$ converges to $\varpi(\cdot)$ as $t \to \infty$.
This chapter reviews several popular methods for convergence analysis, i.e., ascertaining the convergence properties of a Monte Carlo Markov chain using mathematics.
To be specific, we investigate ways to construct bounds on the distance between the distribution of a Markov chain at a given time point and the chain's stationary distribution.

Convergence analysis plays a pivotal role in the theory and application of MCMC.
Some important asymptotic results for MCMC estimators, such as the central limit theorem and the strong invariance principle, rely on conditions on the convergence rate of the Monte Carlo Markov chain; see, e.g., \cite{jones2004markov} and \cite{kuelbs1980almost}.
This type of condition can be verified through appropriate convergence bounds.
It is also possible to derive non-asymptotic error bounds for Monte Carlo estimators based on convergence bounds \citep[][Theorems 3.34 and 3.41]{rudolf2012explicit}. 


A class of methods related to convergence analysis is convergence diagnostics, which aims to assess the performance of an MCMC algorithm by scrutinizing its output \citep{gelman1992inference,brooks1998convergence,gelman2011inference,roy2020convergence}.
Convergence diagnostics can detect problems with an MCMC simulation, but they cannot prove that the simulation is generating a representative sample.
While convergence analysis is often mathematically challenging, it offers robust theoretical guarantees that diagnostics cannot provide.

There is an enormous body of literature devoted to the topic at hand.
This chapter serves primarily as an introductory guide for those embarking on further research in this area.
It is assumed that readers possess a moderate level of familiarity with the languages of measure theoretic probability and linear algebra within Hilbert spaces.

The rest of this chapter is organized as follows.
In Section \ref{sec:basic}, we lay out the basic concepts and notations.
In Section \ref{sec:coupling}, we review the coupling method for constructing convergence bounds.
In Section \ref{sec:l2}, we describe the $L^2$ framework for convergence analysis, with a focus on methods involving isoperimetric inequalities.
Finally, some other methods for constructing convergence bounds are listed in Section \ref{sec:others}.

\section{Basic Setup} \label{sec:basic}

We begin by setting up some notations.
Let $\X$ be a Polish (separable and complete) metric space with metric $\dist: \X^2 \to [0,\infty)$, and let $\B$ be its Borel $\sigma$ algebra.
Denote by $\mathcal{P}(\X)$ the collection of probability measures, or distributions, on $(\X,\B)$.
Let $(X_t)_{t=0}^{\infty}$ be a time-homogeneous Markov chain whose state space is~$\X$.
Let $K: \X \times \B \to [0,1]$ be its Markov transition kernel (Mtk) so that $P(X_{t+1} \in A \mid X_t = x) = K(x, A)$ for $x \in \X$ and $A \in \B$.
For $t \in \mathbb{N}_+ := \{1,2,\dots\}$, we can define the $t$-step Mtk of the chain, which is a function $K^t: \X \times \B \to [0,1]$ satisfying $P(X_t \in A \mid X_0 = x) = K^t(x, A)$.
Indeed, one simply let $K^1(x,A) = K(x,A)$, and $K^{t+1}(x,A) = \int_{\X} K^t(x,\df y) K(y, A)$.
For $\mu \in \mathcal{P}(\X)$, let $\mu K^t(\cdot)  = \int_{\X} \mu(\df x) K^t(x, \cdot)$ if $t \in \mathbb{N}_+$, and, by convention, let $\mu K^0 = \mu$.
Then $\mu K^t(\cdot)$ is the distribution of $X_t$ if $X_0 \sim \mu$.

Assume that $(X_t)_{t=0}^{\infty}$ has a stationary distribution $\varpi \in \mathcal{P}(\X)$, so that $\varpi K^t(\cdot) = \varpi(\cdot)$ for $t \in \mathbb{N} = \{0\} \cup \mathbb{N}_+$.
If this chain is associated with an MCMC algorithm targeting $\varpi(\cdot)$, then the distribution of $X_t$ should converge to $\varpi(\cdot)$ in some sense as $t \to \infty$.
When conducting a convergence analysis, we seek to understand how fast $\mu K^t(\cdot)$ approaches $\varpi(\cdot)$ as~$t$ grows for some class of initial distributions $\mu(\cdot)$.

To conduct a quantitative analysis, we need to define a distance function that quantifies the difference between two probability measures.
A common way to construct such a distance is as follows \citep{zolotarev1984probability, muller1997integral}.
Let~$\F$ be a collection of real measurable functions on~$\X$.
Let $\F'$ be a subset of $\mathcal{P}(\X)$ such that $\int_{\X} |f(x)| \, \mu(\df x) < \infty$ for each $f \in \F$ whenever $\mu \in \F'$. 
For $\mu, \nu \in \F'$, we define the ``integral probability metric"
\[
\|\mu - \nu\|_{\F} = \sup_{f \in \F} |\mu f - \nu f|,
\]
where $\mu f = \int_{\X} f \, \df \mu$.
One can check that, for $\mu, \nu, \omega \in \F'$,
\[
\|\mu - \nu\|_{\F} \leq \|\mu - \omega\|_{\F} + \|\omega - \nu\|_{\F}.
\]
Assume that~$\F$ is rich enough so that $\|\mu - \nu\|_{\F} = 0$ implies that $\mu(A) = \nu(A)$ for $A \in \B$.
Then $\|\mu - \nu\|_{\F}$ serves as a distance between~$\mu$ and~$\nu$.

Below we list some commonly used distances constructed in this manner.
\begin{enumerate}[(I)] 
	\item \label{en:tv} $\F$ is the set of functions~$f$ such that $\sup_{x \in \X} |f(x)| = 1/2$, and $\F' = \mathcal{P}(\X)$.
	Then $\|\mu - \nu\|_{\F}$ is the total variation distance between~$\mu$ and~$\nu$ for $\mu, \nu \in \F'$.
	The same goes if $\F$ is the set of measurable indicator functions.
	In this case, we write $\|\mu - \nu\|_{\F}$ as $\|\mu - \nu\|_{\tv}$.
	If $\mu$ and $\nu$ are absolutely continuous with respect to some $\sigma$-finite measure $\lambda$, then
	\begin{equation} \label{eq:tv}
	\|\mu - \nu\|_{\tv} = \frac{1}{2} \int_{\X} \left| \frac{\df \mu}{\df \lambda}(x) - \frac{\df \nu}{\df \lambda}(x) \right|  \lambda(\df x) = 1 - \int_{\X} \min \left\{ \frac{\df \mu}{\df \lambda}(x), \frac{\df \nu}{\df \lambda}(x) \right\}  \lambda(\df x).
	\end{equation}
	
	\item \label{en:wasser}
	$\F$ is the set of functions~$f$ such that $\sup_{x \neq y} |f(x) - f(y)| / \dist(x,y) = 1$, i.e., the set of functions whose Lipschitz constant is 1.
	$\F'$ is the set of probability measures~$\mu$ such that $\int_{\X} \dist(x_0, x) \, \mu(\df x) < \infty$ for some $x_0 \in \X$.
	Then by the Kantorovich-Rubinstein duality \citep[see, e.g.,][Theorem 5.10]{villani2008optimal}, $\|\mu - \nu\|_{\F}$ is the 1-Wasserstein distance between~$\mu$ and~$\nu$ induced by~$\dist$.
	In this case, we write $\|\mu - \nu\|_{\F}$ as $W_\dist(\mu,\nu)$.
	
	
	\item \label{en:l2} $\F$ is the set of functions~$f$ such that $\int_{\X} f(x)^2 \, \varpi(\df x) = 1$.
	$\F'$ is the set of probability measures~$\mu$ such that~$\mu$ is absolutely continuous with respect to~$\varpi$ (i.e., $\mu \ll \varpi$), and that
	\[
	\int_{\X} \left[\frac{\df \mu}{\df \varpi} (x) \right]^2  \varpi(\df x) < \infty.
	\]
	(It can be checked that, if $\mu \ll \varpi$, then the above display is equivalent to $\int_{\X} |f(x)| \, \mu(\df x)$ being bounded as $f$ varies in $\F$.)
	Then $\|\mu - \nu\|_{\F}$ is the $L^2$ distance between~$\mu$ and~$\nu$.
	In this case, we write $\|\mu - \nu\|_{\F}$ as $\|\mu - \nu\|_2$.
	One can show via the Cauchy-Schwarz inequality that the $L^2$ distance has a dual representation
	\begin{equation} \label{eq:l2-distance}
	\|\mu - \nu\|_2 = \sqrt{\int_{\X} \left[ \frac{\df \mu}{\df \varpi}(x) - \frac{\df \nu}{\df \varpi}(x) \right]^2 \varpi(\df x) }.
	\end{equation}
	
\end{enumerate}

Throughout this chapter, assume that~$\F'$ contains the stationary distribution $\varpi$.
We also assume that $\mu K \in \F'$ whenever $\mu \in \F'$, so that $\mu K^t \in \F'$ for $t \in \mathbb{N}$ whenever $\mu \in \F'$.
It can be shown that the two assumptions always hold in scenarios \eqref{en:tv} and \eqref{en:l2}; see, e.g., Lemma 22.1.3 of \cite{douc2004practical}.
In scenario \eqref{en:wasser}, the second assumption holds if, say, there exist a point $x_0 \in \X$ and finite constants $c_1$ and $c_2$ such that $\int_{\X} \dist(x_0, x') \, K(x, \df x') \leq c_1 \dist(x_0, x) + c_2$ for $x \in \X$.
All examples herein satisfy these two assumptions.

A central goal of convergence analysis is to construct bounds on $\|\mu K^t - \varpi\|_{\F}$ for at least some initial distribution $\mu \in \F'$ that is practically feasible.
We are mainly concerned with constructing upper bounds on $\|\mu K^t - \varpi\|_{\F}$, although lower bounds will also be touched on.
In particular, we shall focus on several methods that enable us to form convergence bounds of the form
\[
\|\mu K^t - \varpi\|_{\F} \leq C_{\mu} \rho^t, \quad t \in \mathbb{N}_+,
\]
where $C_{\mu} \in (0,\infty)$ is a function of the initial distribution~$\mu$, and $\rho$ is a constant in $[0,1)$.
This type of bound, among others, can be used to bound the $(\epsilon,\mu)$-mixing time, which is the smallest $t \in \mathbb{N}_+$ such that $\|\mu K^t - \varpi\|_{\F} \leq \epsilon$, where $\epsilon \in (0,\infty)$ is some prescribed level of tolerance.
Indeed, denoting the $(\epsilon,\mu)$-mixing time by $t_{\F}(\epsilon,\mu)$, the above bound would yield
\[
t_{\F}(\epsilon,\mu) \leq \left\lceil \frac{\log C_{\mu} - \log \epsilon}{- \log \rho} \right\rceil
\]
if $\rho \in (0,1)$, where $\lceil \cdot \rceil$ is the ceiling function.

The choice of the distance function would of course affect the outcome of one's analysis, but bounds in terms of one distance can often be translated to those in terms of another.
For instance, using \eqref{eq:tv}, \eqref{eq:l2-distance}, and Jensen's inequality, one can show that if $\df \mu/\df \varpi$ exists and is squared integrable with respect to $\varpi$, then
\[
2 \|\mu K^t - \varpi\|_{\tv} \leq \|\mu K^t - \varpi\|_2,
\]
so an upper bound on the right-hand-side upper bounds the left-hand-side as well.
See \cite{roberts1997geometric}, \cite{roberts2001geometric}, and \cite{kontoyiannis2012geometric} for additional details on the relationship between convergence bounds in the $L^2$ and total variation distances.
In Section \ref{ssec:oneshot}, we discuss how to translate a bound in terms of the 1-Wasserstein distance to one in terms of the total variation distance.

For the construction of upper bounds on $\|\mu K^t - \varpi\|_{\F}$, we review (a) the coupling method when $\|\cdot - \cdot\|_{\F}$ is the total variation distance or the 1-Wasserstein distance, and (b) the $L^2$ theory, especially techniques based on the conductance and isoperimetric inequalities, when $\|\cdot - \cdot\|_{\F}$ is the $L^2$ distance.
We will then describe a simple method for lower bounding the ``convergence rate" in the $L^2$ framework, which quantifies how slow a chain converges.
Some other important methods are listed with references at the end of the chapter.

To end Section \ref{sec:basic}, we give a couple running toy examples on which we will demonstrate several techniques for convergence analysis.

\begin{example} \label{ex:IMH}
	Let $\X = [0,1]$, and let~$\B$ be the Borel subsets of $[0,1]$.
	Let $s: [0,1] \to (0,\infty)$ be a positive continuous probability density function, 	and denote the corresponding distribution by $\pi_s(\cdot)$.
	Note that, due to continuity, $M_s := \sup_{x \in [0,1]} s(x) < \infty$.
	For $x, x' \in \X$, let $a_s(x,x') = \min\{1, s(x')/s(x)\}$.
	Let $(X_t)_{t=0}^{\infty}$ be a Markov chain such that, given $X_t$, the next state $X_{t+1}$ is generated using the following procedure: Draw $X'$ from the uniform distribution on $[0,1]$; with probability $a_s(X_t, X') $, set $X_{t+1} = X'$; with probability $1 - a_s(X_t, X')$, set $X_{t+1} = X_t$.
	Then $(X_t)_{t=0}^{\infty}$ is associated with an independent Metropolis Hastings algorithm targeting $\pi_s(\cdot)$.
	Its transition kernel is
	\[
	K_s(x, A) = \int_A a_s(x,x') \, \df x' + \left[ 1 - \int_0^1 a_s(x,x') \, \df x' \right] \ind_{x \in A}, \quad x \in [0,1], \, A \in \B,
	\]
	where $\ind_{x \in A}$ is 1 if $x \in A$ and 0 otherwise.
	It is well-known that chains associated with Metropolis Hastings algorithms are reversible with respect to their target distributions.
\end{example}

\begin{example} \label{ex:gaussian}
	Let $\X = \mathbb{R}^p$, where $p$ is a positive integer, and let~$\B$ be the Borel sets.
	Let $\alpha \in [0,1)$ be a constant.
	Define the Gaussian chain as a Markov chain $(X_t)_{t=0}^{\infty}$ such that, given $X_t = x \in \mathbb{R}^p$, $X_{t+1}$ follows the $\mbox{N}_p(\alpha x, \, (1-\alpha^2) I_p )$ distribution, where $\mbox{N}_p(m, V)$ means the $p$-variate normal distribution with mean~$m$ and variance~$V$, and $I_p$ is the $p \times p$ identity matrix.
	Its transition kernel is
	\[
	K_{p, \alpha}(x, A) = \int_A \frac{1}{[2\pi (1- \alpha^2)]^{p/2}} \exp \left[ - \frac{1}{2(1-\alpha^2)} \| x' - \alpha x\|^2 \right]  \df x', \quad x \in \mathbb{R}^p, \, A \in \B,
	\]
	where $\|\cdot\|$ is the Euclidean norm.
	This chain is reversible with respect to the $\mbox{N}_p(0, I_p)$ measure, which will be denoted by $\varpi_p(\cdot)$.
	Indeed, 
	\[
	\int_A \varpi_p(\df x) K_{p, \alpha}(x, B) = \frac{1}{(2\pi)^p (1-\alpha^2)^{p/2}} \int_{A \times B} \exp \left[ - \frac{ \|x'\|^2 + \|x\|^2 - 2 \alpha x^{\top} x' }{2(1-\alpha^2)} \right] \df x \, \df x'
	\]
	is a symmetric function of $A \in \B$ and $B \in \B$.
\end{example}

Due of the simplicity of the chains in these examples, their convergence properties are well-understood, but for illustrative purposes we will feign ignorance in most of our analyses.

%
%

\section{Bounds via coupling}
\label{sec:coupling}

The coupling method is a powerful tool in probability theory that enables one to compare two distributions.
Numerous works have utilized the technique to obtain useful convergence bounds for a wide range of important Markov chains.
See \cite{aldous1983random,bubley1997path,lindvall1986coupling,burdzy2000efficient,pillai2017kac,durmus2019high,eberle2019quantitative,bou2020coupling}, just to name several.
In this section, we describe the general idea of this approach, and illustrate it through a few simple examples.
In particular, we use it to derive a convergence bound from a set of ``drift and minorization conditions."

\subsection{Basic theory}

For $\mu, \nu \in \mathcal{P}(\X)$, a coupling of theirs is a distribution in $\mathcal{P}(\X^2)$, say~$\gamma$, such that $\gamma(A \times \X) = \mu(A)$ and $\gamma(\X \times A) = \nu(A)$ for $A \in \B$.
In other words,~$\gamma$ is a coupling of~$\mu$ and~$\nu$ if it is the joint distribution of some random vector $(X,Y)$ such that, marginally, $X \sim \mu$ and $Y \sim \nu$.
Denote the set of all couplings of~$\mu$ and~$\nu$ by $C(\mu, \nu)$.
Suppose that we can find a measurable function $\Dist: \X^2 \to [0,\infty]$ such that, for $f \in \F$,
\begin{equation} \label{ine:lipschitz}
	|f(x) - f(y)| \leq   \Dist(x,y), \quad (x,y) \in \X^2.
\end{equation}
(The function $\Dist(\cdot,\cdot)$ is often some semi-metric.)
Then, for $\mu, \nu \in \F'$ and $\gamma \in C(\mu,\nu)$, 
\begin{equation} \label{ine:lipschitz-1}
\begin{aligned}
	\|\mu - \nu\|_{\F} &= \sup_{f \in \F} |\mu f - \nu f| \\
	&= \sup_{f \in \F} \left| \int_{\X^2} [f(x) - f(y)] \, \gamma(\df (x,y)) \right| \\
	&\leq \int_{\X^2} \Dist(x,y) \, \gamma(\df (x,y)).
\end{aligned}
\end{equation}
If one can construct a random vector $(X,Y)$ whose joint distribution is in $C(\mu,\nu)$, then $\|\mu - \nu\|_{\F} \leq E[\Dist(X,Y)]$.
In particular, if one can, on some probability space, define a copy of $X_t$ along with a random element $Y_t$ such that $X_t \sim \mu K^t$ and $Y_t \sim \varpi$, i.e., $(X_t, Y_t) \sim \gamma_t \in C(\mu K^t, \varpi)$, then $\|\mu K^t - \varpi\|_{\F} \leq E[\Dist(X_t, Y_t)]$.
Usually, to obtain a sharp bound, $X_t$ and $Y_t$ need to be correlated in some suitable manner.

This approach can be used to bound the total variation and the 1-Wasserstein distances.
Indeed, if $\F$ is the set of functions~$f$ such that $\sup_{x \in \X} |f(x)| = 1/2$, then \eqref{ine:lipschitz} holds for $f \in \F$ when $\Dist(x,y) \geq \ind_{x \neq y}$.
Thus, if $(X,Y) \sim \gamma \in C(\mu,\nu)$, then taking $D(x,y) = \ind_{x \neq y}$ yields
\begin{equation} \label{ine:coupling-tv}
\|\mu - \nu\|_{\tv} \leq \int_{\X^2} \ind_{x \neq y} \, \gamma(\df (x,y)) = P(X \neq Y).
\end{equation}
If $\F$ is the set of functions~$f$ such that $\sup_{x \neq y} |f(x) - f(y)|/\dist(x,y) = 1$, then \eqref{ine:lipschitz} holds for $f \in \F$ when $\Dist(x,y) \geq \dist(x,y)$.
Thus, if $(X,Y) \sim \gamma \in C(\mu,\nu)$, then taking $D(x,y) = \psi(x,y)$ yields
\[
W_\dist(\mu, \nu) \leq \int_{\X^2} \dist(x,y) \, \gamma(\df (x, y)) = E[\dist(X,Y)].
\]
This is obvious if one knows the more standard definition of the 1-Wasserstein distance:
\[
W_\dist(\mu, \nu) = \inf_{\gamma \in C(\mu,\nu)} \int_{\X^2} \dist(x,y) \, \gamma(\df (x, y)).
\]
It is worth noting that, in the above display, there always exists a coupling $\gamma$ that attains the infimum \citep[see, e.g.,][Theorem 4.1]{villani2008optimal}.

It is common (but not always optimal) that couplings of $\mu K^t$ and $\varpi = \varpi K^t$ are constructed in a Markovian manner.
That is, one constructs a bivariate Markov chain $(X_t, Y_t)_{t=0}^{\infty}$ with state space $\X^2$ such that the distribution of $(X_t, Y_t)$ is in $C(\mu K^t, \varpi K^t)$ for $t \in \mathbb{N}$.
This can be achieved if $X_0 \sim \mu$, $Y_0 \sim \varpi$, and the Mtk of the bivariate chain, denoted by $\tilde{K}: \X^2 \times \B^2 \to [0,1]$, is a coupling kernel of~$K$ in the following sense:
For $x, y \in \X$, $\tilde{K}((x,y), \cdot)$ is in $C(\delta_x K, \delta_y K)$, where $\delta_x$ is the point mass at $x$ (i.e., $\delta_x(A) = \ind_{x \in A}$ for $A \in \B$) so that $\delta_x K(\cdot) = K(x,\cdot)$.
In other words, given $(X_t, Y_t) = (x,y)$, $X_{t+1}$ is distributed as $K(x,\cdot)$, and $Y_{t+1}$ is distributed as $K(y,\cdot)$.

A coupling kernel always exists, since we can let
\[
\int_A \tilde{K}((x,y), \df(x',y')) = \int_A K(x, \df x') K(y, \df y'), \quad (x,y) \in \X^2, \, A \in \B^2.
\]
But this construction wouldn't be very helpful since there is no dependence between $X_t$ and $Y_t$ conditioning on $(X_0, Y_0)$, which usually renders the bound $\|\mu K^t - \varpi\|_{\F} \leq E[\Dist(X_t, Y_t)]$ too loose.
The following is an elementary result that provides a more useful coupling kernel under a simple but restrictive condition.

\begin{theorem} \label{thm:doeblin}
	Suppose that there exist $\varepsilon > 0$ and a probability measure $\nu \in \mathcal{P}(\X)$ such that, for $x \in \X$ and $A \in \B$,
	\[
	K(x,A) \geq \varepsilon \nu(A).
	\] 
	(This is called Doeblin's, or a global minorization condition.)
	Then one may construct a coupling kernel $\tilde{K}$ of $K$ such that
	\begin{equation} \label{ine:contraction-doeblin}
	\int_{\X^2} \ind_{x' \neq y'} \tilde{K}((x,y), \df(x',y')) \leq (1-\varepsilon) \ind_{x \neq y}
	\end{equation}
	for $(x,y) \in \X^2$.
\end{theorem}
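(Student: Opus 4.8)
The plan is to exploit the minorization condition to split the transition kernel into a common component that forces coalescence and a residual component that is handled independently. First I would note that, since $K(x,\X) = 1 \geq \varepsilon \nu(\X) = \varepsilon$, we necessarily have $\varepsilon \leq 1$; the case $\varepsilon = 1$ is trivial (then $K(x,\cdot) = \nu(\cdot)$ for every $x$, and one couples by setting $X_{t+1} = Y_{t+1} \sim \nu$), so assume $\varepsilon \in (0,1)$. The minorization condition guarantees that, for each $x$,
\[
R(x,\cdot) := \frac{K(x,\cdot) - \varepsilon \nu(\cdot)}{1-\varepsilon}
\]
defines a probability measure on $(\X,\B)$: it is nonnegative because $K(x,A) \geq \varepsilon \nu(A)$ for every $A \in \B$, and it has total mass $(1-\varepsilon)/(1-\varepsilon) = 1$. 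This yields the decomposition $K(x,\cdot) = \varepsilon \nu(\cdot) + (1-\varepsilon) R(x,\cdot)$.

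Next I would construct $\tilde K$ explicitly as a randomized procedure for generating $(X_{t+1}, Y_{t+1})$ from $(X_t, Y_t) = (x,y)$. Draw a coin $B \sim \mathrm{Bernoulli}(\varepsilon)$. If $B = 1$, draw $Z \sim \nu$ and set $X_{t+1} = Y_{t+1} = Z$. If $B = 0$ and $x = y$, draw $Z \sim R(x,\cdot)$ and again set $X_{t+1} = Y_{t+1} = Z$; if $B = 0$ and $x \neq y$, draw $X_{t+1} \sim R(x,\cdot)$ and $Y_{t+1} \sim R(y,\cdot)$ independently. I would then verify that this prescription is a coupling kernel, i.e.\ $\tilde K((x,y),\cdot) \in C(K(x,\cdot), K(y,\cdot))$, by computing marginals: in every case the $X_{t+1}$-marginal is $\varepsilon \nu(\cdot) + (1-\varepsilon) R(x,\cdot) = K(x,\cdot)$, and symmetrically the $Y_{t+1}$-marginal is $K(y,\cdot)$.

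Finally I would establish the contraction inequality through the case split built into the construction. When $x = y$, both branches set $X_{t+1} = Y_{t+1}$, so $\int_{\X^2} \ind_{x' \neq y'}\, \tilde K((x,y),\df(x',y')) = 0 = (1-\varepsilon)\ind_{x \neq y}$. When $x \neq y$, the event $\{X_{t+1} \neq Y_{t+1}\}$ can occur only on $\{B = 0\}$, which has probability $1-\varepsilon$, so that $\int_{\X^2} \ind_{x' \neq y'}\, \tilde K((x,y),\df(x',y')) \leq P(B = 0) = 1 - \varepsilon = (1-\varepsilon)\ind_{x \neq y}$. The one point requiring care---and the only real subtlety---is the treatment of the diagonal $x = y$: the factor $\ind_{x \neq y}$ on the right forces the bound to vanish there, so the construction must coalesce the two coordinates whenever they already agree, rather than drawing the residuals independently. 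Handling the diagonal separately (the $B = 0$, $x = y$ branch above) is precisely what delivers this, and it is the step I would be most careful to state explicitly.
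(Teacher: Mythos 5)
Your proof is correct and follows essentially the same route as the paper's: decompose $K(x,\cdot) = \varepsilon\nu(\cdot) + (1-\varepsilon)R(x,\cdot)$ via the minorization, coalesce the two coordinates through the shared $\nu$-component (and on the diagonal), and draw the residuals independently off the diagonal. The only differences are cosmetic---you flip the $\mathrm{Bernoulli}(\varepsilon)$ coin even when $x=y$ (which yields the same kernel, since $\varepsilon\nu + (1-\varepsilon)R(x,\cdot) = K(x,\cdot)$) and you make the trivial $\varepsilon=1$ case explicit, which the paper handles only implicitly.
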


\begin{remark} \label{rem:doeblin}
	If $(X_t, Y_t)_{t=0}^{\infty}$ is a bivariate chain whose Mtk satisfies \eqref{ine:contraction-doeblin}, then, for $t \in \mathbb{N}$, $P(X_{t+1} \neq Y_{t+1} \mid X_t, Y_t) = E(\ind_{X_{t+1} \neq Y_{t+1}} \mid X_t, Y_t ) \leq (1-\varepsilon) \ind_{X_t \neq Y_t}$.
\end{remark}

\begin{proof} [Proof of Theorem \ref{thm:doeblin}]
	Let $(X_t, Y_t)_{t=0}^{\infty}$ be a bivariate Markov chain with state space $\X^2$ that evolves as follows.
	Suppose that the current state is $(X_t, Y_t) = (x,y) \in \X^2$.
	If $x = y$, let $X_{t+1} = Y_{t+1}$ be distributed as $K(x, \cdot)$.
	When $x \neq y$, do the following.
	With probability $\varepsilon$, let $X_{t+1} = Y_{t+1}$ be distributed as $\nu$;
	with probability $1 - \varepsilon$ (if $\varepsilon < 1$), let $X_{t+1}$ be distributed according to the probability measure 
	\begin{equation} \label{eq:Xres}
	A \mapsto \frac{K(x,A) - \varepsilon \nu(A)}{1 - \varepsilon},
	\end{equation}
	and, independently, let $Y_{t+1}$ be distributed according to the probability measure
	\begin{equation} \label{eq:Yres}
		A \mapsto \frac{K(y,A) - \varepsilon \nu(A)}{1 - \varepsilon}.
	\end{equation}
	Note that the two measures are well-defined whenever $\varepsilon \in (0,1)$ due to Doeblin's condition.
	Evidently, given $(X_t, Y_t) = (x,y)$, $X_{t+1}$ is distributed as $K(x, \cdot)$, and $Y_{t+1}$ is distributed as $K(y, \cdot)$.
	Thus, the Mtk of the bivariate chain, which we denote by~$\tilde{K}$, is a coupling kernel of~$K$.
	
	By construction, for $x, y \in \X^2$,
	\[
	\int_{\X^2} \ind_{x' = y'} \tilde{K}((x,y), \df(x',y'))  \geq \begin{cases}
		\varepsilon, & x \neq y, \\
		1, & x = y.
	\end{cases}
	\]
	This establishes \eqref{ine:contraction-doeblin} for $(x,y) \in \X^2$.
\end{proof}

Consider the independent Metropolis Hastings chain in Example \ref{ex:IMH}.
Since $s(x) \leq M_s$ for $x \in \X$, it holds that, for $x \in \X = [0,1]$ and $A \in \B$,
\begin{equation} \label{ine:doeblin-IMH}
K_s(x,A) \geq \int_A \inf_{x \in [0,1]} a_s(x,x') \, \df x' = \int_A \min \left\{ 1, \frac{s(x')}{M_s} \right\} \, \df x' = \int_A \frac{s(x')}{M_s} \, \df x' = \frac{1}{M_s} \pi_s(A).
\end{equation}
That is, Doeblin's condition holds with $\varepsilon = 1/M_s$.
Hence, there exists a coupling kernel of $K_s$ satisfying \eqref{ine:contraction-doeblin} for $(x,y) \in [0,1]^2$ with $1-\varepsilon = 1-1/M_s$.
On the other hand, the Gaussian chain in Example \ref{ex:gaussian} does not satisfy Doeblin's condition with any positive $\varepsilon$ since, for any bounded $A \in \B$, $\inf_{x \in \X} K_{p,\alpha}(x,A) = 0$.


As implied by Remark \ref{rem:doeblin}, \eqref{ine:contraction-doeblin} is a type of ``contraction condition" that indicates $\ind_{X_t \neq Y_t}$ decreases in expectation at a geometric rate as $t$ grows.
Let us now show exactly how a coupling kernel that satisfies a contraction condition can be used to construct a convergence bound.

\begin{theorem} \label{thm:contraction}
	Let $\tilde{K}$ be a coupling kernel of~$K$.
	Suppose that there exist a constant $\rho < 1$ and a measurable function $\Dist: \X^2 \to [0,\infty]$ satisfying \eqref{ine:lipschitz} for all $f \in \F$ such that the following contraction condition holds:
	\begin{equation} \label{ine:contraction}
	\int_{\X^2} \Dist(x',y') \, \tilde{K}((x,y), \df (x', y')) \leq \rho \Dist(x,y)
	\end{equation}
	for $(x,y) \in \X^2$.
	Then, for $\mu \in \F'$ and $t \in \mathbb{N}$, 
	\[
	\|\mu K^t - \varpi\|_{\F} \leq \int_{\X^2} \Dist(x,y) \, \gamma(\df (x,y)) \, \rho^t,
	\]
	where $\gamma$ is any coupling of~$\mu$ and~$\varpi$.
\end{theorem}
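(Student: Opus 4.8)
The plan is to realize the bound \eqref{ine:lipschitz-1} dynamically by running a single bivariate chain driven by $\tilde{K}$ started from a coupling of $\mu$ and $\varpi$, and then to show that the contraction condition \eqref{ine:contraction} forces the expected ``distance'' $E[\Dist(X_t,Y_t)]$ to decay geometrically. Concretely, I would fix an arbitrary coupling $\gamma \in C(\mu,\varpi)$, let $(X_0,Y_0) \sim \gamma$, and let $(X_t,Y_t)_{t=0}^{\infty}$ evolve according to $\tilde{K}$; the existence of such a chain is the standard construction of a Markov chain from an initial law and a transition kernel. The endgame is the chain of inequalities
\[
\|\mu K^t - \varpi\|_{\F} \leq E[\Dist(X_t,Y_t)] \leq \rho^t \, E[\Dist(X_0,Y_0)] = \rho^t \int_{\X^2} \Dist(x,y) \, \gamma(\df(x,y)),
\]
where the first inequality is \eqref{ine:lipschitz-1} and the second is the geometric decay to be established.

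First I would justify the opening inequality, i.e., verify that $(X_t,Y_t)$ is genuinely a coupling of $\mu K^t$ and $\varpi$. This follows by induction on $t$ from the defining property of a coupling kernel. If $X_t \sim \mu K^t$ and $Y_t \sim \varpi$, then, because $\tilde{K}((x,y),\cdot) \in C(K(x,\cdot),K(y,\cdot))$, the $X$-marginal of $(X_{t+1},Y_{t+1})$ is $\int_{\X} \mu K^t(\df x) \, K(x,\cdot) = \mu K^{t+1}$, and likewise $Y_{t+1} \sim \varpi K = \varpi$, the last equality using stationarity of $\varpi$. The base case holds since $\gamma \in C(\mu,\varpi)$. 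Thus $(X_t,Y_t) \sim \gamma_t \in C(\mu K^t,\varpi)$, and since $\mu K^t,\varpi \in \F'$ by the standing assumptions, \eqref{ine:lipschitz-1} applies and gives $\|\mu K^t-\varpi\|_{\F} \leq E[\Dist(X_t,Y_t)]$.

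Next I would establish the geometric decay. By the Markov property of the bivariate chain, for each $t$,
\[
E[\Dist(X_{t+1},Y_{t+1}) \mid X_t,Y_t] = \int_{\X^2} \Dist(x',y') \, \tilde{K}((X_t,Y_t),\df(x',y')) \leq \rho \, \Dist(X_t,Y_t)
\]
almost surely, the inequality being exactly the pointwise contraction condition \eqref{ine:contraction} evaluated at the random state $(X_t,Y_t)$. Because $\Dist \geq 0$, the tower property is valid in $[0,\infty]$ and yields $E[\Dist(X_{t+1},Y_{t+1})] \leq \rho \, E[\Dist(X_t,Y_t)]$; iterating from $t=0$ gives $E[\Dist(X_t,Y_t)] \leq \rho^t E[\Dist(X_0,Y_0)]$. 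Combining with the previous paragraph and noting $E[\Dist(X_0,Y_0)] = \int_{\X^2}\Dist \, \df\gamma$ completes the argument.

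I do not anticipate a serious obstacle: once the Markovian coupling is in place, the result is essentially clean bookkeeping. The one point deserving care is that $\Dist$ is permitted to take the value $+\infty$, so all expectations must be read in $[0,\infty]$ throughout. Working with nonnegative functions is precisely what lets the tower property and the induction proceed with no integrability hypothesis, and it also makes the conclusion vacuously valid (both sides infinite) when $\int_{\X^2}\Dist \, \df\gamma = \infty$. A secondary item is to confirm the applicability of \eqref{ine:lipschitz-1}, which reduces to the standing assumptions guaranteeing $\mu K^t \in \F'$ and $\varpi \in \F'$.
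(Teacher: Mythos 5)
Your proposal is correct and takes essentially the same route as the paper's proof: run a bivariate chain driven by $\tilde{K}$ from a coupling $\gamma \in C(\mu,\varpi)$, invoke \eqref{ine:lipschitz-1} to bound $\|\mu K^t - \varpi\|_{\F}$ by $E[\Dist(X_t,Y_t)]$, and iterate the contraction condition through the tower property. The extra details you supply---the induction verifying $(X_t,Y_t) \sim \gamma_t \in C(\mu K^t,\varpi)$ and the remark that all expectations live in $[0,\infty]$ so no integrability hypothesis is needed---simply make explicit what the paper leaves implicit.
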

\begin{proof}
	Let $\mu \in \F'$ be arbitrary.
	Let $(X_t, Y_t)_{t=0}^{\infty}$ be a bivariate chain associated with~$\tilde{K}$ such that $(X_0, Y_0) \sim \gamma \in C(\mu, \varpi)$.
	Then the distribution of $(X_t, Y_t)$ is in $C(\mu K^t, \varpi)$.
	Thus, by \eqref{ine:lipschitz-1}, for $t \in \mathbb{N}$,
	\[
	\|\mu K^t - \varpi\|_{\F} \leq E[\Dist(X_t, Y_t)].
	\]
	On the other hand, by \eqref{ine:contraction}, for $t \in \mathbb{N}$,
	\[
	E[\Dist(X_{t+1}, X_{t+1}) \mid X_t, Y_t] \leq \rho \Dist(X_t, Y_t).
	\]
	By the two displays above and the tower property of conditional expectations,
	\[
	\|\mu K^t - \varpi\|_{\F} \leq E[\Dist(X_t, Y_t)] \leq \rho^t E[\Dist(X_0, Y_0)] = \int_{\X^2} \Dist(x,y) \, \gamma(\df (x,y)) \rho^t.
	\]
\end{proof}

We may apply Theorem \ref{thm:contraction} to Example \ref{ex:IMH}.
It was already demonstrated through Theorem \ref{thm:doeblin} that there is a coupling kernel of $K_s$ that satisfies \eqref{ine:contraction} for $(x,y) \in [0,1]^2$ with $\Dist(x,y) = \ind_{x \neq y}$ and $\rho = 1 - 1/M_s$.
Let $\F$ be the set of functions~$f$ such that $\sup_{x \neq \X} |f(x)| = 1/2$ so that $\|\cdot - \cdot \|_{\F}$ corresponds to the total variation distance, and \eqref{ine:lipschitz} holds for $f \in \F$.
By Theorem \ref{thm:contraction}, for $\mu \in \mathcal{P}(\X)$,
\[
\|\mu K_s^t - \pi_s\|_{\tv} \leq  \int_{\X^2} \ind_{x \neq y} \, \mu(\df x) \, \pi_s(\df y) \left(1 - \frac{1}{M_s} \right)^t \leq \left(1 - \frac{1}{M_s} \right)^t.
\]
If $M_s$ is known, then this is a fully computable convergence bound for the independent Metropolis Hastings chain.

Turning to the Gaussian chain in Example \ref{ex:gaussian}, we note that Theorem \ref{thm:doeblin} is insufficient to provide a coupling kernel with a proper contraction condition.
To effectively utilize Theorem \ref{thm:contraction} in this context, we would need some other techniques for constructing coupling kernels.
This will be resolved in Section \ref{ssec:oneshot}.

\subsection{One-shot coupling} \label{ssec:oneshot}

To obtain a sharp convergence bound using Theorem~\ref{thm:contraction}, one needs to find a coupling kernel $\tilde{K}$  such that $\rho$ is small (or at the very least, strictly less than~1) in the contraction condition~\eqref{ine:contraction}.
This is not always easy, but for some functions $\Dist(\cdot,\cdot)$ it may be easier than for others.
Of course, the choice of $\Dist(\cdot,\cdot)$ depends on the distance function $\|\cdot - \cdot\|_{\F}$ because of the restriction~\eqref{ine:lipschitz}.
When a bound in terms of the total variation distance is desired, $\Dist(x,y)$ is often taken to be a weighted version of $\ind_{x \neq y}$, i.e., $\ind_{x \neq y}\, h(x,y)$ for some $h(x,y) \geq 1$; see Section \ref{ssec:drift}.
When a bound in terms of the 1-Wasserstein distance induced by $\dist(\cdot,\cdot)$ is desired, one may consider $\Dist(x,y)$ of the form $\dist(x,y)^r h(x,y)^{1-r}$ for some $h(x,y) \geq 1$ and $r \in (0,1]$ \citep[][Section 20.4]{douc2018markov}.
Sometimes, for the distance function $\|\cdot - \cdot\|_{\F}$ that we are interested in, it is too difficult to establish a good contraction condition for a function $\Dist(\cdot,\cdot)$ that satisfies \eqref{ine:lipschitz} for $f \in \F$.
In such cases, it may be helpful to consider some other distance function $\|\cdot - \cdot\|_{\mathcal{G}}$ first, establish a good contraction condition suitable for that distance, and then transform the resulting convergence bound to one in terms of the original distance.

Starting from a convergence bound in terms of the 1-Wasserstein distance, it is often possible to obtain a bound in terms of the total variation distance through a technique called ``one-shot coupling" \citep{roberts2002one,madras2010quantitative}.
In particular, one can use the following theorem.

\begin{theorem} \citep[][Theorem 12]{madras2010quantitative} \label{lem:continuity}
	Let $\F'$ be the set of probability measures $\mu \in \mathcal{P}(\X)$ such that $\int_{\X} \dist(x_0, x) \, \mu(\df x)$ for some $x_0 \in \X$. 
	Let $\nu$ be a $\sigma$-finite measure on $(\X,\B)$.
	Assume that there is a measurable function $k: \X^2 \to [0,\infty)$ such that, for $x \in \X$ and $A \in \B$, $K(x,A) = \int_A k(x,x') \, \nu(\df x')$.
	Suppose further that there exists a constant $b < \infty$ such that
	\begin{equation} \label{ine:continuity}
		1 -  \int_{\X} \min \{ k(x,x'), k(y,x')\} \, \nu(\df x') \leq b \dist(x,y)
	\end{equation}
	for $x,y \in \X$.
	Then, for $t \in \mathbb{N}$ and $\mu \in \F'$,
	\[
	\|\mu K^{t+1} - \varpi\|_{\tv} \leq b W_\dist(\mu K^t, \varpi).
	\]
\end{theorem}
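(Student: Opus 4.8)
The plan is to recognize that condition \eqref{ine:continuity} is exactly a Lipschitz bound, in total variation, on the one-step kernel $x \mapsto K(x,\cdot)$, and then to transport this one-step estimate across an arbitrary coupling of $\mu K^t$ and $\varpi$. The first step is to rewrite the left-hand side of \eqref{ine:continuity}: applying the second equality in \eqref{eq:tv} with dominating measure $\nu$ and densities $k(x,\cdot)$ and $k(y,\cdot)$ gives
\[
1 - \int_{\X} \min\{k(x,x'),k(y,x')\}\,\nu(\df x') = \|K(x,\cdot) - K(y,\cdot)\|_{\tv},
\]
so that \eqref{ine:continuity} reads $\|K(x,\cdot) - K(y,\cdot)\|_{\tv} \le b\,\dist(x,y)$ for all $x,y \in \X$.

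Next I would fix $\mu \in \F'$ and an arbitrary coupling $\gamma \in C(\mu K^t, \varpi)$; by the standing assumptions of Section \ref{sec:basic} both $\mu K^t$ and $\varpi$ lie in $\F'$, so $W_\dist(\mu K^t,\varpi) < \infty$. Since $\varpi = \varpi K$ by stationarity and $\mu K^{t+1} = (\mu K^t)K$, the marginals of $\gamma$ give, for any measurable $f$ with $\sup_{x \in \X}|f(x)| \le 1/2$,
\[
\mu K^{t+1} f - \varpi f = \int_{\X^2} [(Kf)(x) - (Kf)(y)]\,\gamma(\df(x,y)),
\]
where $(Kf)(x) = \int_{\X} f(x')\,K(x,\df x')$; the interchange of integration is justified by the density representation $K(x,\df x') = k(x,x')\,\nu(\df x')$ and the $\sigma$-finiteness of $\nu$.

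Now for each such $f$ one has $|(Kf)(x) - (Kf)(y)| \le \|K(x,\cdot) - K(y,\cdot)\|_{\tv}$, a bound independent of $f$, so bounding under the integral and taking the supremum over $f$ yields the convexity of total variation under mixtures,
\[
\|\mu K^{t+1} - \varpi\|_{\tv} \le \int_{\X^2} \|K(x,\cdot) - K(y,\cdot)\|_{\tv}\,\gamma(\df(x,y)) \le b \int_{\X^2} \dist(x,y)\,\gamma(\df(x,y)),
\]
the last step using the reformulated \eqref{ine:continuity}. Taking the infimum over $\gamma \in C(\mu K^t,\varpi)$ and invoking the standard definition $W_\dist(\mu K^t,\varpi) = \inf_{\gamma} \int_{\X^2}\dist(x,y)\,\gamma(\df(x,y))$ then gives the claim.

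The conceptual crux is the first step: once the left-hand side of \eqref{ine:continuity} is identified as a total variation distance between transition kernels, the remainder is the familiar mixture/convexity argument. The only genuinely technical point is the Fubini interchange above, which is routine here; I would also note that one need not appeal to the existence of an optimal coupling, since working with an arbitrary $\gamma$ and infimizing at the end suffices.
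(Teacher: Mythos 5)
Your proof is correct, but it reaches the conclusion by a different mechanism than the paper. The paper's proof is genuinely probabilistic: it takes an \emph{optimal} coupling $\gamma_t$ attaining $W_\dist(\mu K^t,\varpi)$ (invoking the cited existence result for optimal couplings), and then, conditionally on $(X,Y)\sim\gamma_t$, explicitly constructs a maximal ``one-shot'' coupling $(X',Y')$ of $K(X,\cdot)$ and $K(Y,\cdot)$ --- equal draws from the normalized overlap density $q_{x,y}(\cdot)/a_{x,y}$ with probability $a_{x,y}=\int_{\X}\min\{k(x,z),k(y,z)\}\,\nu(\df z)$, independent draws from the residual measures otherwise --- and concludes via the coupling inequality \eqref{ine:coupling-tv}: $\|\mu K^{t+1}-\varpi\|_{\tv}\le P(X'\neq Y')=E[1-a_{X,Y}]\le b\,E[\dist(X,Y)]$. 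You instead observe, via \eqref{eq:tv}, that $1-a_{x,y}$ \emph{is} the total variation distance $\|\delta_x K-\delta_y K\|_{\tv}$, and then run a purely analytic duality argument: convexity of the total variation norm under mixtures, followed by an infimum over arbitrary couplings. The two arguments rest on the same underlying fact --- the overlap integral equals one minus the total variation distance between the one-step kernels, and the paper's construction is precisely the coupling realizing that distance --- but yours buys two small simplifications: no random variables need to be constructed, and no appeal to the existence of an optimal Wasserstein coupling is required (you correctly note that infimizing over arbitrary couplings suffices). What it gives up is the explicit probabilistic mechanism (coupling exactly at the final step) that gives the one-shot technique its name and that parallels the construction in Theorem \ref{thm:doeblin}; the paper's version also makes visible why the residual draws can be taken independent, which is the reusable template for this style of argument. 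One further remark: the integral $\int_{\X^2}\|\delta_x K-\delta_y K\|_{\tv}\,\gamma(\df(x,y))$ in your argument requires joint measurability of the integrand, but this is immediate from your own identification, since $(x,y)\mapsto 1-\int_{\X}\min\{k(x,z),k(y,z)\}\,\nu(\df z)$ is jointly measurable by Tonelli's theorem and the assumed measurability of $k$.
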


\begin{proof}
	Recall that
	\[
	W_\dist(\mu K^t, \varpi) = \inf_{\gamma_t \in C(\mu K^t, \varpi)} \int_{\X^2} \dist(x,y) \, \gamma(\df (x,y)),
	\]
	and one can find $\gamma_t \in C(\mu K^t, \varpi)$ that attains the infimum.
	Let $(X,Y) \in \gamma_t$, so that
	\begin{equation} \label{eq:madrascoupling}
		W_\dist(\mu K^t, \varpi) = E [\dist(X,Y)]
	\end{equation}
	
	For $x, y \in \X$, let 
	\[
	q_{x,y}(z) = \min \{ k(x,z), k(y,z) \}, 
	\]
	and set $a_{x,y} = \int_{\X} q_{x,y}(z) \, \nu(\df z)$, so that $1-a_{x,y}$ is the left-hand-side of \eqref{ine:continuity}.
	Given $(X,Y) = (x,y)$, generate $(X',Y')$ in the following manner:
	With probability $a_{x,y}$, let $X' = Y'$ be distributed according to the probability density function $q_{x,y}(\cdot)/a_{x,y}$;
	with probability $1-a_{x,y}$ (if $a_{x,y} < 1$), let $X'$ be distributed according to the probability measure
	\[
	A \mapsto \frac{K(x,A) - \int_A q_{x,y}(z) \, \nu(\df z) }{1- a_{x,y}}
	\]
	and, independently, let $Y'$ be distributed according to the probability measure
	\[
	A \mapsto \frac{K(y,A) - \int_A q_{x,y}(z) \, \nu(\df z) }{1- a_{x,y}}.
	\]
	It is easy to see that, given $(X,Y) = (x,y)$, $X'$ is distributed as $K(x, \cdot)$ and $Y'$ is distributed as $K(y, \cdot)$.
	Thus, marginally,~$X'$ is distributed as
	\[
	\int_{\X^2} K(x, \cdot) \, \gamma_t(\df (x, y)) = \int_{\X} \mu K^t(\df x) K(x, \cdot)  = \mu K^{t+1}(\cdot),
	\]
	while $Y'$ is distributed as $\varpi K(\cdot) = \varpi(\cdot)$.
	Hence, the joint distribution of $(X', Y')$ is in $C(\mu K^{t+1}, \varpi)$.

	By~\eqref{ine:coupling-tv}, \eqref{ine:continuity}, and \eqref{eq:madrascoupling},
	\[
	\begin{aligned}
		\|\mu K^{t+1} - \varpi\|_{\tv} &\leq P(X' \neq Y') \\
		&= E[ P( X' \neq Y'  \mid X, Y) ] \\
		&\leq E(1-a_{X,Y}) \\
		&\leq bE[\dist(X,Y)] \\
		&= b W_\dist(\mu K^t, \varpi).
	\end{aligned}
	\]
\end{proof}

\subsubsection{Application to the Gaussian chain}

Let us apply Theorem \ref{thm:contraction} and Theorem \ref{lem:continuity} to the Gaussian chain in Example \ref{ex:gaussian}, and use it as a stage for discussing some practical issues concerning the application of coupling methods.

Suppose that our goal is to bound $\|\mu K_{p,\alpha}^t - \varpi_p\|_{\tv}$ from above.
To apply Theorem~\ref{thm:contraction}, one could let $\Dist(x,y) = \ind_{x \neq y}$ for $x, y \in \mathbb{R}^p$.
But, in this case, it is impossible to establish a nontrivial contraction condition.
Indeed, by~\eqref{ine:coupling-tv}, for any coupling kernel $\tilde{K}$ of $K_{p,\alpha}$ and $x,y \in \mathbb{R}^p$,
\[
\int_{\mathbb{R}^p \times \mathbb{R}^p} \ind_{x' \neq y'} \, \tilde{K}((x,y), \df (x', y')) \geq \|\delta_x K_{p,\alpha} - \delta_y K_{p,\alpha}\|_{\tv}.
\]
But, by~\eqref{eq:tv}, whenever $\alpha > 0$, the total variation distance between $\mbox{N}_p(\alpha x, (1-\alpha^2) I_p )$ and $\mbox{N}_p(\alpha y, (1-\alpha^2) I_p )$ goes to 1 as $\|x - y\| \to \infty$.
Thus, the contraction condition
\[
\int_{\mathbb{R}^p \times \mathbb{R}^p} \ind_{x' \neq y'} \, \tilde{K}((x,y), \df (x', y')) \leq \rho \ind_{x \neq y}, \quad x, y \in \mathbb{R}^p,
\]
can only hold when $\rho \geq 1$.
One could get somewhere if $\Dist(x,y)$ is taken to be, say, $\ind_{x \neq y} \, [p^{-1} \|x\|^2 + p^{-1} \|y\|^2 + 1]^{1-r}$ for some $r \in (0,1)$; see Section~\ref{ssec:drift}.
However, it turns out that it is much easier to construct a sharp convergence bound by first considering the 1-Wasserstein distance induced by the Euclidean distance, and then utilize one-shot coupling.

For $x,y \in \mathbb{R}^p$, let $\dist(x,y) = \|x - y\|$.
Let $\F'_p$ be the set of probability measures~$\mu$ such that $\int_{\mathbb{R}^p} \|x\| \, \mu(\df x) < \infty$.
We will bound $W_\dist(\mu K_{p,\alpha}^t, \varpi_p)$ from above for $\mu \in \F'_p$ using Theorem~\ref{thm:contraction}.

There are many possible ways of constructing coupling kernels.
Here, we use a construction sometimes referred to as the ``common random number coupling."
Let $N$ be an $\mbox{N}_p(0, I_p)$ distributed random vector.
For $(x,y) \in \mathbb{R}^p$, let $\tilde{K}_{p,\alpha}((x,y), \cdot)$ be the joint distribution of $\alpha x + \sqrt{1-\alpha^2} N$ and $\alpha y + \sqrt{1-\alpha^2} N$.
Since $\alpha x' + \sqrt{1-\alpha^2} N \sim K_{p,\alpha}(x', \cdot)$ for $x' \in \mathbb{R}^p$, $\tilde{K}_{p,\alpha}$ is a coupling kernel of $K_{p,\alpha}$. 
For $(x,y) \in \mathbb{R}^p \times \mathbb{R}^p$, 
\[
\begin{aligned}
	\int_{\mathbb{R}^p \times \mathbb{R}^p} \|x'-y'\| \, \tilde{K}_{p,\alpha}((x,y), \df (x',y')) &= E [\| (\alpha x + \sqrt{1-\alpha^2}N) - (\alpha y + \sqrt{1-\alpha^2}N) \| ] \\
	&= \alpha \|x-y\|.
\end{aligned}
\]
Thus, the contraction condition \eqref{ine:contraction} holds on $\mathbb{R}^p \times \mathbb{R}^p$ for $\tilde{K} = \tilde{K}_{p,\alpha}$ with $\Dist(\cdot,\cdot) = \dist(\cdot, \cdot)$ and $\rho = \alpha$.
By Theorem~\ref{thm:contraction} and the triangle inequality, for $t \in \mathbb{N}$,
\begin{equation} \label{ine:Wasserstein-Gaussian}
W_\dist(\mu K_{p,\alpha}^t, \varpi_p) \leq \left( \int_{\mathbb{R}^p} \|x\| \, \mu(\df x) +  \int_{\mathbb{R}^p} \|y\| \, \varpi_p(\df y) \right) \alpha^t.
\end{equation}
This bound reveals that $\mu K_{p,\alpha}^t$ approaches $\varpi_p$ in the 1-Wasserstein distance at a geometric rate~$\alpha$.

If this were a practical problem, the stationary distribution $\varpi_p$ would likely be intractable, and one would not be able to evaluate $\int_{\mathbb{R}^p} \|y\| \, \varpi_p(\df y)$.
It is however possible to bound the integral from above via the following result for a generic Markov chain with Mtk $K(\cdot,\cdot)$.

\begin{theorem}\citep[][Proposition 4.24]{hairer2006ergodic} \label{lem:piv}
	Let $h: \X \to [0,\infty)$ be a measurable function.
	Suppose that there exist $\lambda \in [0,1)$ and $L \in [0,\infty)$ such that, for $x \in \X$,
	\[
	\int_{\X} h(x') K(x, \df x') \leq \lambda h(x) + L.
	\]
	(This is called a drift condition.)
	Then 
	\[
	\int_{\X} h(x) \, \varpi(\df x) \leq \frac{L}{1 - \lambda}.
	\]
\end{theorem}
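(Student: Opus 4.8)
The plan is to integrate the drift condition against the stationary distribution $\varpi$ and let the invariance $\varpi K = \varpi$ collapse the resulting left-hand side. Writing the drift as $\int_{\X} h(x') \, K(x, \df x') \leq \lambda h(x) + L$ and integrating both sides with respect to $\varpi(\df x)$, the right-hand side becomes $\lambda \int_{\X} h \, \df \varpi + L$. For the left-hand side, since $h \geq 0$ everything in sight is nonnegative and measurable, so Tonelli's theorem permits swapping the order of integration:
\[
\int_{\X} \int_{\X} h(x') \, K(x, \df x') \, \varpi(\df x) = \int_{\X} h(x') \, (\varpi K)(\df x') = \int_{\X} h \, \df \varpi,
\]
where the last equality uses the stationarity $\varpi K = \varpi$ assumed in Section \ref{sec:basic}. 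This is the only place where the hypothesis that $\varpi$ is invariant enters.

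Combining the two sides yields the self-referential inequality
\[
\int_{\X} h \, \df \varpi \leq \lambda \int_{\X} h \, \df \varpi + L.
\]
If the integral $\int_{\X} h \, \df \varpi$ is finite, one simply subtracts $\lambda \int_{\X} h \, \df \varpi$ from both sides and divides by $1 - \lambda > 0$ to obtain $\int_{\X} h \, \df \varpi \leq L/(1-\lambda)$, which is the claim. Thus the entire content beyond this elementary manipulation lies in justifying that the rearrangement is legitimate, i.e., that $\int_{\X} h \, \df \varpi < \infty$; otherwise the displayed inequality merely reads $\infty \leq \infty$ and carries no information.

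The finiteness is therefore the crux, and I expect it to be the main obstacle. The natural remedy is a truncation argument: replace $h$ by the bounded functions $h_N = \min\{h, N\}$, for which $\int_{\X} h_N \, \df \varpi \leq N < \infty$ holds automatically, run the same integration-and-invariance computation to produce a \emph{finite} inequality at each level $N$, and then let $N \to \infty$, recovering $\int_{\X} h \, \df \varpi$ on the left by monotone convergence. The delicate point is that $h_N$ need not itself satisfy the drift condition with the same constants: on the region $\{h > N\}$ the trivial bound $\int_{\X} h_N(x') \, K(x, \df x') \leq N$ can exceed $\lambda h_N(x) + L$ once $N > L/(1-\lambda)$, so one cannot merely quote the conclusion for bounded functions. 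One must instead keep track of the error incurred by truncation and check that it is controlled, or equivalently argue that $\int_{\X} h \, \df \varpi = \infty$ is incompatible with the pointwise geometric drift together with the finiteness of $h$. If one is willing to assume at the outset that $\int_{\X} h \, \df \varpi < \infty$ (as is often available in applications, or obtainable from a separate recurrence argument), the proof reduces to the one-line computation of the first two paragraphs.
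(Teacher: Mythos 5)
Your one-line computation is correct \emph{conditional on} $\int_{\X} h \, \df\varpi < \infty$, and you have correctly diagnosed that this finiteness is the entire content of the theorem. But your proposal does not close that gap: you propose truncation, correctly observe that $h_N = \min\{h, N\}$ need not satisfy the drift condition (so one cannot just rerun the argument at level $N$), and then stop, offering only the hope that the error is ``controlled'' or that finiteness can be assumed outright. As written, this is an incomplete proof --- the delicate point you flag is precisely the point left unproven.

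The paper's proof shows how to resolve exactly this obstacle, and the trick is worth internalizing: iterate the drift condition \emph{before} truncating. Applying the one-step drift bound $t$ times gives, for every $x$ and every $t$,
\[
\int_{\X} h(x') \, K^t(x, \df x') \leq \lambda^t h(x) + \frac{L(1-\lambda^t)}{1-\lambda},
\]
so the troublesome $h(x)$ term now carries a factor $\lambda^t$ that vanishes pointwise as $t \to \infty$ (using only that $h(x) < \infty$ at each $x$). One never needs $h_n$ to satisfy any drift condition: since $h_n \leq h$ and $h_n \leq n$, one has
\[
\int_{\X} h_n(x') \, K^t(x, \df x') \leq \min\left\{ \lambda^t h(x) + \frac{L(1-\lambda^t)}{1-\lambda}, \; n \right\}.
\]
Integrating against $\varpi$ and using $\varpi K^t = \varpi$ gives a \emph{finite} inequality (both sides are at most $n$), so no vacuous $\infty \leq \infty$ issue arises. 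Letting $t \to \infty$ with $n$ fixed, the dominated convergence theorem (with the constant $n$ as dominating function) yields $\int_{\X} h_n \, \df\varpi \leq \min\{L/(1-\lambda), n\}$, and then monotone convergence in $n$ gives the claim. In short: your single-step integration cannot separate the finiteness question from the bound, whereas iterating to $t$ steps makes the initial-condition term decay geometrically, and truncation then serves only to license the interchange of limit and integral --- not to satisfy a drift condition.
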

\begin{proof}
	By the drift condition, for $x \in \X$ and $t \in \mathbb{N}_+$,
	\[
	\int_{\X} h(x') K^t(x, \df x') \leq \lambda^t h(x) + \frac{L(1-\lambda^t)}{1 - \lambda}.
	\]
	For $n \in \mathbb{N}_+$, let $h_n: \X \to [0,\infty)$ be such that $h_n(x) = \min \{h(x), n\}$.
	Then
	\[
	\int_{\X} h_n(x') K^t(x, \df x') \leq \min \left\{ \lambda^t h(x) + \frac{L(1-\lambda^t)}{1 - \lambda}, n \right\}.
	\]
	Integrating with respect to $\varpi$ yields
	\[
	\int_{\X} h_n(x) \, \varpi(\df x) = \int_{\X^2} h_n(x') K^t(x, \df x') \, \varpi(\df x) \leq \int_{\X} \min \left\{ \lambda^t h(x) + \frac{L(1-\lambda^t)}{1 - \lambda}, n \right\} \varpi(\df x).
	\]
	By the dominated convergence theorem, letting $t \to \infty$ shows that
	\[
	\int_{\X} h_n(x) \, \varpi(\df x) \leq \min\left\{ \frac{L}{1 - \lambda} , n \right\}.
	\]
	By the monotone convergence theorem, letting $n \to \infty$ yields the desired result.
\end{proof}

In Example \ref{ex:gaussian}, we can pretend that we know little of $\varpi_p$, and bound $\int_{\mathbb{R}^p} \|y\| \, \varpi_p(\df y)$ in the following manner.
Note that
\[
\int_{\mathbb{R}^p} \|x'\|^2 \, K_{p,\alpha}(x, \df x') = \alpha^2 \|x\|^2 + (1-\alpha^2) p.
\]
Thus, by Theorem~\ref{lem:piv},
\begin{equation} \label{ine:int-xnorm}
\int_{\mathbb{R}^p} \|x\| \, \varpi_p(\df y) \leq \sqrt{ \int_{\mathbb{R}^p} \|x\|^2 \, \varpi_p(\df y) } \leq \sqrt{\frac{(1-\alpha^2)p}{1-\alpha^2}} = \sqrt{p}.
\end{equation}

Let us now apply Theorem~\ref{lem:continuity} to transform the Wasserstein distance bound into a total variation bound.
For $x \in \mathbb{R}^p$, let $k_{p,\alpha}(x,\cdot)$ be the density function of the $\mbox{N}_p(\alpha x, (1-\alpha^2)I_p)$ distribution, i.e., $K_{p,\alpha}(x,\cdot)$, with respect to the Lebesgue measure.
A bit of calculus reveals that, for $x,y \in \mathbb{R}^p$,
\begin{equation} \label{ine:normaltv}
1 - \int_{\mathbb{R}^p} \min \{ k_{p,\alpha}(x, x'), k_{p,\alpha}(y, x') \} \, \df x' = 1 - 2 F_N \left( - \frac{\alpha \|x-y\|}{2\sqrt{1-\alpha^2}} \right) \leq \frac{\alpha}{\sqrt{(2\pi) (1-\alpha^2) }} \|x-y\|,
\end{equation}
where $F_N(\cdot)$ is the cumulative distribution function of the one-dimensional standard normal distribution.
By Theorem~\ref{lem:continuity} along with \eqref{ine:Wasserstein-Gaussian} to \eqref{ine:normaltv}, for $\mu \in \F_p'$ and $t \in \mathbb{N}$,
\begin{equation} \label{ine:Gaussian-wasser-tv}
\| \mu K_{p,\alpha}^{t+1} - \varpi_p \|_{\tv} \leq \frac{\alpha}{\sqrt{(2\pi) (1-\alpha^2) }} \left( \int_{\mathbb{R}^p} \|x\| \, \mu(\df x) +  \sqrt{p} \right) \alpha^t.
\end{equation}

This bound indicates that $\mu K_{p,\alpha}^t$ approaches $\varpi_p$ in the total variation distance at a geometric rate $\alpha$, which does not depend on the dimension $p$.
By considering the $L^2$ convergence rate of the chain (see Section \ref{ssec:l2basic}) and utilizing Theorem 2.1 of \cite{roberts1997geometric}, it is possible to show that the convergence rate indicated by \eqref{ine:Gaussian-wasser-tv} is in fact sharp.
That is, it is not possible to find a bound of the form $\| \mu K_{p,\alpha}^t - \varpi_p \|_{\tv} \leq C_{p,\alpha,\mu} \rho_{p,\alpha}^t$, where $C_{p,\alpha,\mu} < \infty$ and $\rho_{p,\alpha} < \alpha$, that holds for all $\mu \in \F_p'$ and $t \in \mathbb{N}_+$.
This is an example of the convergence bound scaling well with the dimension in the sense that the convergence rate indicated by the bound does not deteriorate (go to 1) faster than the true convergence rate does as the dimension grows.
Good scaling is not always easily achieved, as obtaining sharp convergence bounds often becomes more difficult in problems with higher dimensions.
In Section \ref{ssec:drift}, we give a type of bound that is tremendously popular and powerful in certain settings but scales poorly with the dimension when applied to Example \ref{ex:gaussian}.

We may also use \eqref{ine:Gaussian-wasser-tv} to obtain a bound on the mixing time.
For $\mu \in \F_p'$ and $\epsilon > 0$, let $t_{\tv}(\epsilon, \mu)$ be the smallest $t \in \mathbb{N}_+$ such that $\|\mu K^t - \varpi \|_{\tv} \leq \epsilon$.
Then \eqref{ine:Gaussian-wasser-tv} implies that
\[
t_{\tv}(\epsilon, \mu) \leq \left\lceil (-\log \alpha)^{-1} \left\{ \log \left[ \frac{\alpha}{\sqrt{(2\pi) (1-\alpha^2) }} \left( \int_{\mathbb{R}^p} \|x\| \, \mu(\df x) +  \sqrt{p} \right) \right] - \log \epsilon \right\} \right\rceil
\]
if $\alpha \in (0,1)$.
If $ \int_{\mathbb{R}^p} \|x\| \, \mu(\df x) = O(p)$ as $p \to \infty$, then $t_{\tv}(\epsilon, \mu) = O(\log p)$.

\subsection{Drift and minorization} \label{ssec:drift}

As a final application of the coupling method, we use it to derive a convergence bound based on a set of drift and minorization conditions.
Many important convergence bounds in the literature are constructed based on this type of condition \citep{tierney1994markov,rosenthal1995minorization,mengersen1996rates,roberts1999bounds,meyn2012markov}.
While these bounds are often far from sharp \citep{qin2020limitations}, they are very powerful for establishing qualitative results like geometric ergodicity \citep{hobert1998geometric,jarner2000geometric,jones2001honest,jones2004sufficient,roy2007convergence,khare2013geometric,livingstone2019geometric}.

Recall the drift condition in Theorem~\ref{lem:piv}.
We say that the Mtk~$K$ satisfies a drift condition with drift function $h: \X\to [0,\infty)$ if there exist $\lambda \in [0,1)$ and $L \in [0,\infty)$ such that, for $x \in \X$,
\[
Kh(x) := \int_{\X} h(x') K(x, \df x') \leq \lambda h(x) + L.
\]
We say that~$K$ satisfies a minorization condition associated with~$h$ if there exist $\varepsilon > 0$, a probability measure $\nu \in \mathcal{P}(\X)$, and $\Delta > 2L/(1-\lambda)$ such that
\[
K(x, A) \geq \varepsilon \nu(A)
\]
for $A \in \B$ whenever $h(x) \leq \Delta$.

The following convergence bound is reminiscent of a famous result from \cite{rosenthal1995minorization}.
Its proof uses ideas from \cite{hairer2011yet,hairer2011asymptotic,butkovsky2014subgeometric,douc2018markov}.

\begin{theorem} \label{thm:drift}
	Suppose that the above drift and minorization conditions hold.
	Then, for $\mu \in \mathcal{P}(\X)$, $r \in (0,1)$, and $t \in \mathbb{N}$,
	\[
	\| \mu K^t - \varpi \|_{\tv} \leq \left( \mu h + \frac{L}{1 - \lambda} + 1 \right) \rho^t.
	\]
	where
	\begin{equation} \label{eq:rho}
		\rho = \max\left\{(1 - \varepsilon)^r (2L+1)^{1-r},  \left( \lambda + \frac{2L+1-\lambda}{\Delta+1} \right)^{1-r} \right\}.
	\end{equation}
\end{theorem}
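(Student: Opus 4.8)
The plan is to apply Theorem~\ref{thm:contraction} with the total-variation family $\F$ (the functions with $\sup_x |f(x)| = 1/2$) and the distance-like function
\[
\Dist(x,y) = \ind_{x \neq y}\,(1 + h(x) + h(y))^{1-r}.
\]
Since $1 + h(x) + h(y) \geq 1$ and $1 - r > 0$, we have $\Dist(x,y) \geq \ind_{x \neq y}$, so \eqref{ine:lipschitz} holds for every such $f$. It then remains to build a coupling kernel $\tilde K$ of $K$ for which the contraction \eqref{ine:contraction} holds with the stated $\rho$, and to bound the resulting leading constant.

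For the coupling I would mimic the construction in the proof of Theorem~\ref{thm:doeblin}, but apply the minorization only where it is available. Given $(x,y)$: if $x = y$, set $X' = Y'$ with the common law $K(x,\cdot)$ so that coalesced states stay coalesced; if $x \neq y$ and both $h(x) \leq \Delta$ and $h(y) \leq \Delta$, then the minorization gives $K(x,\cdot) \geq \varepsilon \nu$ and $K(y,\cdot) \geq \varepsilon \nu$, so I couple as in Theorem~\ref{thm:doeblin}, setting $X' = Y' \sim \nu$ with probability $\varepsilon$ and drawing residuals independently otherwise; if $x \neq y$ and $\max\{h(x),h(y)\} > \Delta$, draw $X' \sim K(x,\cdot)$ and $Y' \sim K(y,\cdot)$ independently. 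In every case the marginals are correct, so $\tilde K$ is a coupling kernel.

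The crux is the one-step contraction. Fix $x \neq y$ and write $W = 1 + h(x) + h(y)$. The key manipulation is the pointwise identity
\[
\ind_{x' \neq y'}(1 + h(x') + h(y'))^{1-r} = \big(\ind_{x' \neq y'}\big)^{r}\,\big(\ind_{x' \neq y'}(1 + h(x') + h(y'))\big)^{1-r},
\]
to which I apply H\"older's inequality with exponents $1/r$ and $1/(1-r)$ under $\tilde K((x,y),\cdot)$. This bounds $\int \Dist \, d\tilde K$ by $\big(P(X' \neq Y')\big)^{r}\,\big(E[1 + h(X') + h(Y')]\big)^{1-r}$, where I have discarded the indicator in the second factor. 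The second factor is controlled by the drift alone: since marginally $X' \sim K(x,\cdot)$ and $Y' \sim K(y,\cdot)$, it is $1 + Kh(x) + Kh(y) \leq \lambda W + (2L + 1 - \lambda)$. For the first factor I split into the two regimes. When both $h(x),h(y) \leq \Delta$, the minorization gives $P(X' \neq Y') \leq 1 - \varepsilon$, and since $W \geq 1$ one has $\lambda W + (2L+1-\lambda) \leq (2L+1)W$, producing the term $(1-\varepsilon)^r (2L+1)^{1-r}$. When $\max\{h(x),h(y)\} > \Delta$ one has $W > \Delta + 1$, so bounding $P(X' \neq Y') \leq 1$ and writing $\lambda W + (2L+1-\lambda) \leq (\lambda + (2L+1-\lambda)/(\Delta+1))W$ produces the second term. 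Taking the maximum yields \eqref{ine:contraction} with $\rho$ as in \eqref{eq:rho}; I also expect to note that $\Delta > 2L/(1-\lambda)$ forces the second term strictly below $1$.

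Finally I would invoke Theorem~\ref{thm:contraction} with any $\gamma \in C(\mu,\varpi)$. Since the $x$- and $y$-marginals of $\gamma$ are $\mu$ and $\varpi$, concavity of $u \mapsto u^{1-r}$ and Jensen's inequality give $\int \Dist \, d\gamma \leq (1 + \mu h + \varpi h)^{1-r}$, and Theorem~\ref{lem:piv} bounds $\varpi h \leq L/(1-\lambda)$. Using $u^{1-r} \leq u$ for $u \geq 1$ then replaces the exponent by $1$ and delivers the advertised constant $\mu h + L/(1-\lambda) + 1$. The main obstacle is the contraction step: choosing the exponent $1-r$ in $\Dist$ so that the H\"older split cleanly separates the coalescence probability (the source of the $(1-\varepsilon)^r$ factor) from the drift-driven growth of $h$ (the source of the $(1-r)$-powered factor), and correctly matching the two coupling regimes to the two terms inside the maximum.
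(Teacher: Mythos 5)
Your proposal is correct and follows essentially the same route as the paper's own proof: the same weighted indicator $\Dist(x,y) = \ind_{x \neq y}[h(x)+h(y)+1]^{1-r}$, the same coupling kernel that applies the minorization only on $S^2 = \{(x,y): h(x) \leq \Delta,\, h(y) \leq \Delta\}$, the same H\"older split isolating the coalescence probability from the drift term, and the same final appeal to Theorem~\ref{thm:contraction} and Theorem~\ref{lem:piv}. The only cosmetic difference is at the end, where you apply Jensen's inequality before relaxing the exponent $1-r$ to $1$, whereas the paper drops the exponent pointwise via $[h(x)+h(y)+1]^{1-r} \leq h(x)+h(y)+1$ before integrating; both yield the identical constant.
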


\begin{remark}
	Note that $\Delta > 2L/(1-\lambda)$ implies that $\lambda +  (2L+1-\lambda)/(\Delta+1) < 1$.
	Thus, there exists $r \in (0,1)$ such that $\rho$, as given in \eqref{eq:rho}, is strictly less than~1.
\end{remark}

\begin{proof} [Proof of Theorem~\ref{thm:drift}]
	We will make use of Theorem~\ref{thm:contraction}.
	Take $\F$ to be the set of functions $f$ such that $\sup_{x \in \X} |f(x)| = 1/2$ and $\F' = \mathcal{P}(\X)$, so that $\|\cdot - \cdot\|_{\F} = \|\cdot - \cdot\|_{\tv}$.
	
	Fix $r \in (0,1)$.
	For $(x,y) \in \X^2$, let 
	\[
	\Dist(x,y) = \ind_{x \neq y} [h(x) + h(y) + 1]^{1-r}.
	\]
	Then $\Dist(x,y) \geq \ind_{x \neq y} \geq |f(x) - f(y)|$ for $f \in \F$, i.e., \eqref{ine:lipschitz} holds for $f \in \F$.
	
	Let us now construct a coupling kernel that satisfies a contraction condition.
	The construction is somewhat similar to the one in the proof of Theorem \ref{thm:doeblin}.
	Let $S = \{x \in \X: \, h(x) \leq \Delta \}$.
	Let $(X_t, Y_t)_{t=0}^{\infty}$ be a bivariate Markov chain that evolves as follows.
	Suppose that the current state is $(X_t, Y_t) = (x,y)$.
	When $x = y$, simply generate $X_{t+1} = Y_{t+1}$ according to the probability measure $K(x, \cdot)$.
	When $x \neq y$, do the following.
	If $(x,y) \not\in S^2$, then generate $X_{t+1} \sim K(x, \cdot)$ and $Y_{t+1} \sim K(y, \cdot)$ independently.
	If $(x,y) \in S^2$, then, with probability~$\varepsilon$, let $X_{t+1} = Y_{t+1}$ be distributed as~$\nu$;
	with probability $1 - \varepsilon$, let $X_{t+1}$ be distributed according to the probability measure
	\eqref{eq:Xres}, and, independently, let $Y_{t+1}$ be distributed according to the probability measure \eqref{eq:Yres}.
	It is straightforward to check that the Mtk of this chain, denoted by $\tilde{K}$, is a coupling kernel of~$K$.
	
	Now, for $(x,y) \in \X^2$, by H\"{o}der's inequality,
	\[
	\begin{aligned}
		&\int_{\X^2} \Dist(x',y') \, \tilde{K}((x,y), \df (x',y')) \\
		=& \int_{\X^2} \ind_{x' \neq y'}^r \, [h(x') + h(y') + 1]^{1-r} \, \tilde{K}((x,y), \df (x',y')) \\
		\leq & \left[ \int_{\X^2} \ind_{x' \neq y'} \, \tilde{K}((x,y), \df (x',y')) \right]^r \left\{ \int_{\X^2} [h(x') + h(y') + 1] \, \tilde{K}((x,y), \df (x',y')) \right\}^{1-r} \\
		=& \left[ \int_{\X^2} \ind_{x' \neq y'} \, \tilde{K}((x,y), \df (x',y')) \right]^r  [Kh(x) + Kh(y) + 1] ^{1-r}.
	\end{aligned}
	\]
	By construction, 
	\[
	 \int_{\X^2} \ind_{x' \neq y'} \, \tilde{K}((x,y), \df (x',y')) \leq \begin{cases}
	 	(1 - \varepsilon) \ind_{x \neq y}, & (x,y) \in S^2, \\
	 	\ind_{x \neq y}, & \text{otherwise}.
	\end{cases}
	\]
	By the drift condition,
	\[
	\begin{aligned}
		Kh(x) + Kh(y) + 1 &\leq \lambda h(x) + \lambda h(y) + 2L + 1 \\
		&= \left[ \lambda + \frac{2L + 1 - \lambda}{h(x) + h(y) + 1} \right] [h(x) + h(y) + 1] \\
		&\leq \begin{cases}
			(2L+1) [h(x) + h(y) + 1], & (x,y) \in S^2, \\
			\left( \lambda + \frac{2L+1-\lambda}{\Delta+1} \right) [h(x) + h(y) + 1], & \text{otherwise}.
		\end{cases}
	\end{aligned}
	\]
	Combining terms, we find that
	\[
	\int_{\X^2} \Dist(x',y') \, \tilde{K}((x,y), \df (x',y')) \leq \rho \Dist(x,y),
	\]
	where $\rho$ is given in \eqref{eq:rho}.
	
	Let $\gamma$ be a coupling of $\mu$ and $\varpi$.
	By Theorem~\ref{thm:contraction}, for $\mu \in \mathcal{P}(\X)$ and $t \in \mathbb{N}$,
	\[
	\begin{aligned}
		\|\mu K^t - \varpi\|_{\tv} &\leq \int_{\X^2} \Dist(x,y) \, \gamma(\df (x, y)) \rho^t \\
		&\leq \int_{\X^2} [h(x) + h(y) + 1] \, \gamma(\df (x, y)) \rho^t \\
		&= (\mu h + \varpi h + 1) \rho^t.
	\end{aligned}
	\]
	Applying Theorem~\ref{lem:piv} to bound $\varpi h$ gives us the desired result.
\end{proof}

For alternative derivations of drift and minorization-based convergence bounds that rely less on the coupling method, see  \cite{meyn1994computable,baxendale2005renewal,jerison2019quantitative}.

The drift and minorization condition presented here is just one among several commonly used forms.
For some other useful versions of drift and minorization, including those used for establishing subgeometric convergence, see \cite{jarner2002polynomial,douc2004practical,douc2008bounds,butkovsky2014subgeometric,andrieu2015quantitative,durmus2016subgeometric,zhou2022dimension}.

\subsubsection{Application to the Gaussian chain}

We now apply Theorem~\ref{thm:drift} to the Gaussian chain in Example \ref{ex:gaussian}.
A drift function we can use is $h(x) = p^{-1} \|x\|^2$, $x \in \mathbb{R}^p$.
Then
\[
K_{p,\alpha} h(x) = \alpha^2 h(x) + 1-\alpha^2 .
\]
So a drift condition holds with $\lambda = \alpha^2$ and $L = 1-\alpha^2$.
Let $\Delta > 2L/(1-\lambda) = 2$, and let $S = \{x \in \mathbb{R}^p: \, h(x) \leq \Delta\}$.
Recall that $k_{p,\alpha}(x, \cdot)$ is the density of $K_{p,\alpha}(x,\cdot)$ for $x \in \X$.
For $x' \in \mathbb{R}^p$, let
\[
q(x') = \inf_{x \in S} k_{p,\alpha}(x,x') = \frac{1}{[2\pi(1-\alpha^2)]^{p/2}} \exp \left[ - \frac{(\|x'\| + \alpha\sqrt{p \Delta} )^2}{2(1-\alpha^2)} \right].
\]
Then, for $x \in S$ and $x' \in \mathbb{R}^p$,
\[
k_{p,\alpha}(x, x') \geq \varepsilon \frac{q(x')}{\int_{\mathbb{R}^p} q(x'') \, \df x'' },
\]
where 
\[
\varepsilon = \int_{\mathbb{R}^p} q(x'') \, \df x'' = \int_{\mathbb{R}^p} \frac{1}{[2\pi(1-\alpha^2)]^{p/2}} \exp \left[ - \frac{(\|x''\| + \alpha\sqrt{p\Delta} )^2}{2(1-\alpha^2)} \right] \df x''.
\]
Thus, a minorization condition holds with this value of $\varepsilon$.
Applying Theorem~\ref{thm:drift} shows that, for $\mu \in \mathcal{P}(\mathbb{R}^p)$ and $t \in \mathbb{N}$,
\[
\|\mu K_{p,\alpha}^t - \varpi_p \|_{\tv} \leq \left( \mu h + 2 \right) \rho^t,
\]
where, for some $r \in (0,1)$,
\begin{equation} \label{eq:rho-gaussian-drift}
\rho = \max\left\{ (1 - \varepsilon)^r (3 - 2\alpha^2)^{1-r}  , \left( \alpha^2 + \frac{3(1-\alpha^2)}{\Delta+1} \right)^{1-r} \right\}.
\end{equation}

For concreteness, take $p = 10$ and $\alpha = 1/2$.
Then
\[
\begin{aligned}
	\varepsilon &= \int_{\mathbb{R}^{10}} \left( \frac{2}{3 \pi} \right)^5 \exp \left[ - \frac{2 (\|x''\| + \sqrt{2.5\Delta})^2 }{3} \right] \df x'' \\
	&= \int_0^{\infty} \frac{2^6 u^9}{3^5 \times 4!} \exp \left[ - \frac{2 (u + \sqrt{2.5\Delta})^2 }{3} \right] \df u.
\end{aligned}
\]
For instance, if $\Delta = 4$, then $\varepsilon \approx 2.28 \times 10^{-7}$.
In \eqref{eq:rho-gaussian-drift}, we can optimize the value of~$r$ to find the smallest value of~$\rho$, which yields $\rho \approx 1 - 6 \times 10^{-8}$.
Other choices of $\Delta \in (2, \infty)$ would result in values of $\rho$ that are very close to unity as well. 
When $t$ is large, the resulting upper bound on $\|\mu K_{p,\alpha}^t - \varpi_p \|_{\tv}$, which is proportional to $\rho^t$, is extremely conservative.
Indeed, recall that the much sharper bound from~\eqref{ine:Gaussian-wasser-tv} is proportional to $\alpha^t = 0.5^t$.
Through experiments one can find that the conservativeness of \eqref{eq:rho-gaussian-drift} is exacerbated when the dimension $p$ is increased.
This is consistent with empirical evidence and theoretical analyses in the existing literature, which suggest that bounds based on drift and minorization conditions typically scale poorly with dimensions \citep{qin2020limitations}.
Oftentimes, this type of bound is more suitable for establishing qualitative results such as geometric ergodicity.

\section{$L^2$ theory}
\label{sec:l2}

The $L^2$ theory for Markov chains is a framework for studying the convergence properties of a Markov chain, usually reversible, in terms of the $L^2$ distance by examining the linear operator associated with the chain's transition kernel.
A substantial body of literature works within this theoretical framework, offering a diverse array of analytical techniques \citep[][to name some]{amit1996convergence,liu:wong:kong:1994,roberts1997geometric,roberts1997updating,diaconis2000analysis,diaconis2008gibbs,hobert2008theoretical,khare2011spectral,dwivedi2019log,andrieu2022comparison}. 
We will first review some basic concepts.
Then, we explain how isoperimetric inequalities, a type of inequality that regulates the geometric features of the target distribution, can be leveraged to analyze Markov chains within this framework.
Finally, we review some simple techniques for showing how slow a chain converges.

\subsection{Basic theory} \label{ssec:l2basic}

Throughout Section~\ref{sec:l2}, let $\F$ be the set of functions~$f$ such that $\int_{\X} f(x)^2 \, \varpi(\df x) = 1$, and let $\F'$ be the set of probability measures~$\mu$ such that $\df \mu/ \df \varpi$ is squared integrable with respect to $\varpi$.
Then $\|\mu - \nu\|_{\F}$ is the $L^2$ distance $\|\mu - \nu\|_2$ for $\mu, \nu \in \F'$.

The $L^2$ theory for Markov chains begins with the examination of a linear space formed by some functions on~$\X$.
Denote by $L^2(\varpi)$ the set of real measurable functions~$f$ on $\X$ such that $\int_{\X} f(x)^2 \, \varpi(\df x) < \infty$,
with the understanding that two functions are equal if their difference is $\varpi$-almost everywhere vanishing.
For $c \in \mathbb{R}$ and $f, g\in L^2(\varpi)$, let $(-f)(x) = -f(x)$, $(cf)(x) = c f(x)$, $(f+g)(x) = f(x) + g(x)$, and $(f-g)(x) = f(x) - g(x)$ for $x \in \X$.
Then $L^2(\varpi)$ forms a real linear space.
For $f, g \in L^2(\varpi)$, define their inner product as
\[
\langle f, g \rangle = \int_{\X} f(x) g(x) \, \varpi(\df x),
\]
and let $\|f\|_2 = \langle f, f \rangle^{1/2}$.
Then $\|\cdot\|_2$ is a norm, and shall be referred to as the $L^2(\varpi)$ norm.
It can be shown that $L^2(\varpi)$ is a Hilbert space \citep[see, e.g.,][Theorem 13.15]{bruckner2008}.

	The space $L^2(\varpi)$ provides a natural stage for studying the $L^2$ distance between distributions.
	Indeed, a distribution $\mu$ is in $\F'$ if and only if the function $\df \mu/ \df \varpi$ exists and is in $L^2(\varpi)$.
	Moreover, using the Cauchy-Schwarz inequality, one can derive \eqref{eq:l2-distance}, which states that, for $\mu, \nu \in \F'$, 
	\[
	\|\mu - \nu\|_2 = \left\| \frac{\df \mu}{\df \varpi} - \frac{\df \nu}{\df \varpi} \right\|_2.
	\]

It will be convenient to work with the subspace $L_0^2(\varpi)$, which consists of functions $f \in L^2(\varpi)$ such that $\varpi f = 0$.

\begin{remark}
	If a random element $X$ is distributed as $\varpi$, and $f$ and $g$ are in $L_0^2(\varpi)$, then $E[f(X)] = E[g(X)] = 0$, $\langle f, g \rangle = \mbox{cov} \, [f(X), g(X)]$, and $\|f\|_2^2 = \mbox{var} \, [f(X)]$.
\end{remark}


The Mtk $K(\cdot,\cdot)$, which satisfies $\varpi K(\cdot) = \varpi(\cdot)$, can be regarded as a bounded linear operator (called a Markov operator) on $L_0^2(\varpi)$ in the following way.
For $f \in L_0^2(\varpi)$, let
\[
Kf (x) = \int_{\X} f(x') K(x, \df x'), \quad x \in \X.
\]
The map $f \mapsto Kf$ is clearly linear.
To see that its range is in $L_0^2(\varpi)$, note that when $f \in L_0^2(\varpi)$,
\[
\int_{\X} Kf(x) \, \varpi(\df x) = \int_{\X} \int_{\X} f(x') K(x, \df x') \, \varpi(\df x) = \int_{\X} f(x) \, \varpi(\df x) = 0,
\]
and, by Jensen's inequality,
\begin{equation} \label{ine:Jensen}
	\int_{\X} \left[ \int_{\X} K(x, \df x') f(x') \right]^2 \varpi(\df x) \leq \int_{\X} \int_{\X} f(x')^2 K(x, \df x') \, \varpi(\df x) = \int_{\X} f(x')^2 \, \varpi(\df x') < \infty.
\end{equation}
The operator norm of $K$ is
\[
\|K\|_2 = \sup_{f \in L_0^2(\varpi), \, f \neq 0} \frac{\|Kf\|_2}{\|f\|_2} = \sup_{f \in L_0^2(\varpi), \, \|f\|_2 = 1} \|Kf\|_2 .
\]
By \eqref{ine:Jensen}, $\|K\|_2 \leq 1$.

\begin{remark} \label{rem:covariance}
	Let $(X_t)_{t=0}^{\infty}$ be a chain associated with~$K$.
	Then, for $f \in L_0^2(\varpi)$, $Kf(X_0) = E[f(X_1) \mid X_0]$.
	If, furthermore, $X_0 \sim \varpi$, then, for $f, g \in L_0^2(\varpi)$,
	\[
	\|Kf\|_2^2 = \mbox{var} \, \{ E[f(X_1) \mid X_0] \}, \quad  \langle f, Kg \rangle = \mbox{cov} \, (f(X_0), g(X_1)).
	\]
\end{remark}

For $t \in \mathbb{N}_+$, the $t$-step Mtk $K^t(\cdot, \cdot)$ also defines an operator $K_t$ on $L_0^2(\varpi)$, with $K_1 f = K f$, and
\[
K_t f(x) = \int_{\X} f(x') K^t(x, \df x') = \int_{\X} Kf(x') K_{t-1}(x, \df x') 
\]
when $t \geq 2$.
Note that $K_t f$ is $K$ applied to~$f$ $t$ times.
In other words, $K_t$ is just $K^t$, the product (composition) of $t$ $K$'s.
By convention, $K^0$ is the identity operator.

The convergence behavior of a Markov chain associated with the Mtk $K(\cdot,\cdot)$ is tied to the properties of the operator~$K$.
Indeed, the following theorem is well-known, and can be found in, e.g., \cite{roberts1997geometric}.

\begin{theorem} \label{thm:l2}
	For $\mu \in \F'$ and $t \in \mathbb{N}$,
	\begin{equation} \label{ine:l2bound}
	\|\mu K^t - \varpi\|_2 \leq \|\mu - \varpi\|_2 \|K^t\|_2 \leq \|\mu - \varpi\|_2 \|K\|_2^t.
	\end{equation}
\end{theorem}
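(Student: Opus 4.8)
The plan is to work directly from the dual (integral probability metric) representation of the $L^2$ distance and to reduce everything to the Cauchy--Schwarz inequality together with the definition of the operator norm. Write $g = \df\mu/\df\varpi$, which lies in $L^2(\varpi)$ because $\mu \in \F'$; since $\varpi g = 1$, the centered density $g - 1$ belongs to $L_0^2(\varpi)$, and by \eqref{eq:l2-distance} one has $\|\mu - \varpi\|_2 = \|g - 1\|_2$.

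First I would unfold the supremum defining $\|\mu K^t - \varpi\|_2$. For any $f \in \F$ (so $\|f\|_2 = 1$), Fubini together with the stationarity identity $\varpi K^t = \varpi$ gives $\mu K^t f - \varpi f = (\mu - \varpi)(K^t f)$. The key manoeuvre is to center the test function: since $\mu - \varpi$ is a signed measure of total mass zero it annihilates constants, and $K$ fixes constants, so $K^t f$ may be replaced by $K^t(f - \varpi f)$ without altering the pairing. Thus $\mu K^t f - \varpi f = (\mu-\varpi)\bigl(K^t(f-\varpi f)\bigr) = \langle K^t(f-\varpi f),\, g-1\rangle$, where now both factors lie in $L_0^2(\varpi)$---the former by the Jensen bound underlying \eqref{ine:Jensen}, the latter because $\varpi(g-1)=0$---which is precisely the space on which $K$ acts as the operator of the $L^2$ theory.

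From here Cauchy--Schwarz gives $|\mu K^t f - \varpi f| \leq \|K^t(f - \varpi f)\|_2 \, \|g - 1\|_2 \leq \|K^t\|_2 \, \|f - \varpi f\|_2 \, \|g - 1\|_2$, and since $\|f - \varpi f\|_2^2 = \|f\|_2^2 - (\varpi f)^2 \leq 1$ the factor $\|f - \varpi f\|_2$ drops out. Taking the supremum over $f \in \F$ yields the first inequality $\|\mu K^t - \varpi\|_2 \leq \|K^t\|_2 \|\mu - \varpi\|_2$. The second inequality is immediate from submultiplicativity of the operator norm applied to the composition $K^t = K \circ \cdots \circ K$, namely $\|K^t\|_2 \leq \|K\|_2^t$.

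The step I expect to be the only real subtlety is the centering. The operator norm $\|K^t\|_2$ is defined as a supremum over $L_0^2(\varpi)$, so one must take care that the densities (which integrate to $1$, not $0$) and the test functions (which need not be centered) are first reduced to their components in $L_0^2(\varpi)$ before the operator norm is invoked; conflating these would produce a spurious mismatch. Everything else---the stationarity identity, the placement of $K^t(f-\varpi f)$ in $L^2(\varpi)$, and Cauchy--Schwarz---is routine. An alternative, equally clean route would pass through the adjoint: one checks that the density map satisfies $\df(\mu K)/\df\varpi - 1 = K^{*}(g-1)$ using that $K^{*}$ also fixes constants, iterates to get $(K^{*})^t(g-1)$, and invokes $\|(K^{t})^{*}\|_2 = \|K^t\|_2$; the centering issue reappears there as the need to verify $K^{*}1 = 1$.
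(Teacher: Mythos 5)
Your proposal is correct and follows essentially the same route as the paper's proof: both center the test function to $f - \varpi f \in L_0^2(\varpi)$, rewrite the pairing as the inner product $\langle \df\mu/\df\varpi - 1, K^t(f - \varpi f)\rangle$, and conclude via Cauchy--Schwarz, the bound $\|f - \varpi f\|_2 \leq 1$, and submultiplicativity of the operator norm. The centering subtlety you flag is exactly the step the paper also handles, so there is nothing to add.
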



\begin{proof} 
	The second inequality follows from the sub-multiplicity of operator norms.
	We will focus on establishing the first inequality.
	Note that if $f \in \F$ so that $\|f\|_2 = 1$, then $f - \varpi f \in L_0^2(\varpi)$, and $\|f - \varpi f\|_2^2 = \|f\|_2^2 - (\varpi f)^2 \leq 1$.
	(For a constant $c$, $f-c$ means the function satisfying $(f-c)(x) = f(x) - c$.)
	Then
	\[
	\begin{aligned}
		\|\mu K^t - \varpi\|_2 &= \sup_{f \in \F} |(\mu K^t) f - (\varpi K^t) f| \\
		&= \sup_{f \in \F} |(\mu K^t) (f - \varpi f) - (\varpi K^t) (f - \varpi f)| \\
		&= \sup_{f \in \F} \left| \int_{\X^2} [\mu(\df x) - \varpi(\df x)] K^t(x, \df x') [f(x') - \varpi f] \right| \\
		&= \sup_{f \in \F} \left\langle \frac{\df \mu}{\df \varpi} - 1, K^t (f - \varpi f) \right\rangle \\
		&\leq \left\| \frac{\df \mu}{\df \varpi} - 1 \right\|_2 \|K^t\|_2 \sup_{f \in \F}  \|f - \varpi f\|_2 \\
		&\leq \|\mu - \varpi\|_2 \|K^t\|_2.
	\end{aligned}
	\]
	Note that we have used the Cauchy-Schwarz inequality and the definition of the operator norm.
\end{proof}

Theorem~\ref{thm:l2} implies that, if $\|K\|_2 \leq \rho$ for some $\rho < 1$, then asymptotically $\|\mu K^t - \varpi\|_2$ decreases with~$t$ at a geometric rate of $\rho^t$ or faster.
In fact, if the Mtk $K(\cdot, \cdot)$ is reversible with respect to $\varpi(\cdot)$, then the converse is true as well.
Note that $K(\cdot, \cdot)$ is reversible if and only if
\[
\int_{\X^2} f(x) g(x') K(x, \df x') \, \varpi(\df x) = \int_{\X^2} g(x) f(x') K(x, \df x') \, \varpi(\df x)
\]
for $f, g \in L_0^2(\varpi)$, i.e., $\langle f, Kg \rangle = \langle Kf, g \rangle$.
In other words, the Mtk $K(\cdot,\cdot)$ is reversible if and only if the operator $K$ is self-adjoint.
Using this fact we can establish the following result.

\begin{theorem} \citep[][Theorem 2.1]{roberts1997geometric} \label{thm:l2-2}
	Suppose that $K(\cdot,\cdot)$ is reversible.
	Suppose further that there exist $C: \F' \to [0,\infty)$ and $\rho < 1$ such that, for $\mu \in \F'$ and $t \in \mathbb{N}$, 
	\begin{equation} \label{ine:l2convergence}
	\|\mu K^t - \varpi\|_2 \leq C(\mu) \rho^t.
	\end{equation}
	Then $\|K\|_2 \leq \rho$.
\end{theorem}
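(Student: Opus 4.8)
The plan is to convert the hypothesis \eqref{ine:l2convergence} into a constraint on the spectrum of the operator $K$, exploiting the fact that reversibility makes $K$ self-adjoint on $L_0^2(\varpi)$. The bridge is the identity $\|\mu K^t - \varpi\|_2 = \|K^t \phi\|_2$, where $\phi = \df\mu/\df\varpi - 1 \in L_0^2(\varpi)$. To derive it, I would write $\df\mu/\df\varpi = 1 + \phi$ and note that for every $f \in L_0^2(\varpi)$,
\[
\int_{\X} f \, \df(\mu K^t) = \int_{\X} K^t f \, \df\mu = \langle K^t f, 1 + \phi \rangle = \langle K^t f, \phi \rangle = \langle f, K^t \phi \rangle,
\]
where the third equality uses $\langle K^t f, 1\rangle = \varpi f = 0$ and the last uses self-adjointness of $K$. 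Since $\df(\mu K^t)/\df\varpi - 1$ is precisely the element of $L_0^2(\varpi)$ representing the functional $f \mapsto \int_{\X} f \, \df(\mu K^t)$, this forces $\df(\mu K^t)/\df\varpi - 1 = K^t\phi$, and hence $\|\mu K^t - \varpi\|_2 = \|K^t\phi\|_2$.

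Next I would manufacture a rich supply of admissible test functions. For any bounded $\phi \in L_0^2(\varpi)$, choosing $\epsilon > 0$ small enough that $1 + \epsilon\phi \geq 0$ makes $1 + \epsilon\phi$ a genuine probability density with respect to $\varpi$ (it integrates to $1$ because $\varpi\phi = 0$), defining some $\mu_\phi \in \F'$. The identity above gives $\|\mu_\phi K^t - \varpi\|_2 = \epsilon\|K^t\phi\|_2$, so \eqref{ine:l2convergence} yields $\|K^t\phi\|_2 \leq B_\phi\, \rho^t$ for all $t \in \mathbb{N}$, where $B_\phi = C(\mu_\phi)/\epsilon < \infty$. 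I would then invoke the spectral theorem for the bounded self-adjoint operator $K$: letting $E$ be its projection-valued spectral measure, $\|K^t\phi\|_2^2 = \int \lambda^{2t} \, \df\langle E(\lambda)\phi, \phi\rangle$. If the spectral measure of $\phi$ placed mass $c > 0$ on $\{|\lambda| \geq \rho + \delta\}$ for some $\delta > 0$, then $\|K^t\phi\|_2^2 \geq c(\rho+\delta)^{2t}$, which outgrows $B_\phi^2 \rho^{2t}$ for large $t$ — a contradiction. Hence, writing $S = \{\lambda : |\lambda| > \rho\}$, we obtain $\|E(S)\phi\|_2^2 = \langle E(S)\phi, \phi\rangle = 0$, i.e. $E(S)\phi = 0$, for every bounded $\phi$. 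Since bounded functions are dense in $L_0^2(\varpi)$ and $E(S)$ is a bounded operator, $E(S) = 0$; thus the spectrum of $K$ lies in $[-\rho,\rho]$, and because the norm of a self-adjoint operator equals its spectral radius, $\|K\|_2 \leq \rho$.

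I expect the main obstacle to be the step that handles the initial-distribution-dependent constant $C(\mu)$ in \eqref{ine:l2convergence}. Because this constant varies with $\mu$ (hence with the test function $\phi$), one cannot simply take a supremum over $\phi$ to read off $\|K^t\|_2$ and conclude directly. The spectral-measure argument is designed precisely to sidestep this: for each fixed admissible $\phi$ it extracts only the \emph{qualitative} conclusion that $\phi$ has no spectral mass outside $[-\rho,\rho]$, a property wholly insensitive to the magnitude of $B_\phi$, and the density argument then upgrades this pointwise statement into the operator identity $E(S) = 0$. A secondary point requiring care is the restriction to bounded $\phi$, which is what guarantees $1 + \epsilon\phi \geq 0$ and thus the legitimacy of $\mu_\phi$ as a probability measure; this restriction is exactly why the final density step cannot be omitted.
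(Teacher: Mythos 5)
Your proof is correct, and it shares the paper's spectral-theoretic core---both arguments rest on the spectral theorem for the self-adjoint operator $K$ and on the observation that a geometric bound whose constant depends only on a \emph{fixed} initial measure forbids that measure's density from carrying spectral mass beyond $\rho$---but the way you manufacture admissible initial distributions is genuinely different. The paper argues by contradiction: assuming $\|K\|_2 \geq \rho + \varepsilon$, it picks a nonzero $f \in L_0^2(\varpi)$ in the range of the projection $E_K([\rho+\varepsilon,\infty)\cup(-\infty,-\rho-\varepsilon])$, splits $f = f_+ - f_-$, and normalizes the two parts into probability measures $\mu_\pm \in \F'$; applying \eqref{ine:l2convergence} to both and using the triangle inequality
\[
\|\mu_+ K^t - \varpi\|_2 + \|\mu_- K^t - \varpi\|_2 \geq \|\mu_+ K^t - \mu_- K^t\|_2 \geq \frac{2\|f\|_2}{\|f\|_1}(\rho+\varepsilon)^t
\]
produces the contradiction directly. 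You instead perturb the stationary density, taking $\df\mu_\phi/\df\varpi = 1 + \epsilon\phi$ for bounded $\phi \in L_0^2(\varpi)$, conclude for each such $\phi$ that its spectral measure puts no mass on $\{\lambda: |\lambda| > \rho\}$, and then upgrade this to $E(S) = 0$ by density of bounded mean-zero functions, finishing with the identity of norm and spectral radius for self-adjoint operators. The trade-off is clear: the paper's positive/negative-part decomposition accommodates arbitrary, possibly unbounded, $f \in L_0^2(\varpi)$ (its normalized parts are automatically square-integrable densities), so no approximation step is needed; your perturbation construction is arguably more transparent and also isolates the useful identity $\|\mu K^t - \varpi\|_2 = \|K^t(\df\mu/\df\varpi - 1)\|_2$, but it forces the restriction to bounded $\phi$ and hence the final density argument, which you correctly recognize cannot be omitted. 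Both proofs neutralize the $\mu$-dependent constant $C(\mu)$ in the same essential way: fix the test measure(s) once, let $t \to \infty$, and let the mismatch between geometric rates do the work.
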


\begin{proof}
	By assumption, $K$ is a self-adjoint operator.
	Then $K$ has a spectral decomposition: for $f, g \in L_0^2(\varpi)$ and $t \in \mathbb{N}$,
	\[
	\langle K^t f, g \rangle = \int_{-\infty}^{\infty} \lambda^t \langle E_K(\df \lambda) f, g \rangle,
	\]
	where $E_K(\cdot)$ is the spectral measure of $K$, which is a projection-valued measure that is supported on the spectrum of $K$.
	See, e.g., \cite{conway1990course}, \S IX.2; \cite{arveson2006short}, Section~2.7.
	An important property of $E_K(\cdot)$ is that, for a measurable subset $B$ of $\mathbb{R}$, if a non-vanishing function $f \in L_0^2(\varpi)$ is in the range of the orthogonal projection operator $E_K(B)$, then $\|f\|_2^{-2} \langle f, E_K(\cdot) f \rangle$ is a probability measure concentrated on $B$.
	
	Suppose that $\|K\|_2 > \rho$ so that $\|K\|_2 \geq \rho + \varepsilon$ for some $\varepsilon > 0$.
	Then there exists a non-vanishing function $f \in L_0^2(\varpi)$ in the range of the projection operator $E_K([\rho + \varepsilon, \infty) \cup (-\infty, -\rho - \varepsilon])$.
	Let $f_+(x) = \max\{ f(x), 0 \}$ and $f_-(x) = \max\{ -f(x), 0 \}$ for $x \in \X$, so that $f = f_+ - f_-$.
	Let 
	\[
	\mu_+(A) = \frac{2 \int_A f_+(x) \, \varpi(\df x)}{\|f\|_1}, \quad \mu_-(A) = \frac{2 \int_A f_-(x) \, \varpi(\df x)}{\|f\|_1}
	\]
	for $A \in \B$, where
	\[
	\|f\|_1 = \int_{\X} |f(x)| \, \varpi(\df x) = 2 \int_{\X} f_+(x) \, \varpi(\df x) = 2 \int_{\X} f_-(x) \, \varpi(\df x).
	\]
	Then $\mu_+$ and $\mu_-$ are in $\F'$.
	Moreover, $f/\|f\|_2$ is in $\F$.
	By the spectral decomposition, for $t \in \mathbb{N}$,
	\[
	\begin{aligned}
		\|\mu_+ K^{2t} - \mu_- K^{2t} \|_2 &\geq \int_{\X^2} [\mu_+(\df x) - \mu_-(\df x)] K^{2t}(x, \df x') \frac{f(x')}{\|f\|_2} \\
		&= 2 \left\langle \frac{f_+ - f_-}{\|f\|_1}, K^{2t} \frac{f}{\|f\|_2} \right\rangle \\
		&= \frac{2}{\|f\|_1 \|f\|_2} \int_{-\infty}^{\infty} \lambda^{2t} \langle f, E_K(\df \lambda) f \rangle \\
		&\geq \frac{2\|f\|_2}{\|f\|_1} (\rho + \varepsilon)^{2t} .
	\end{aligned}
	\]
	The last line holds because the probability measure $\|f\|_2^{-2} \langle f, E_K(\cdot) f \rangle$ is concentrated on $[\rho + \varepsilon, \infty) \cup (-\infty, -\rho - \varepsilon]$.
	The $L^2$ distance satisfies the triangle inequality, so
	\[
	\|\mu_+ K^t - \varpi\|_2 + \|\mu_- K^t - \varpi\|_2 \geq \|\mu_+ K^t - \mu_- K^t \|_2 \geq \frac{2\|f\|_2}{\|f\|_1} (\rho + \varepsilon)^t
	\]
	for $t \in \mathbb{N}$.
	Then it is impossible for~\eqref{ine:l2convergence} to hold for all $\mu \in \F'$ and $t \in \mathbb{N}$.
	Thus, it must hold that $\|K\|_2 \leq \rho$.
\end{proof}

Theorems~\ref{thm:l2} and~\ref{thm:l2-2} show that, if $K(\cdot,\cdot)$ is reversible, then $\|K\|_2$, which lies in $[0,1]$, can be regarded the convergence rate of the corresponding chain.
The smaller $\|K\|_2$ is, the faster the chain converges.

In some simple scenarios, it is possible to calculate $\|K\|_2$ directly using functional analytic techniques.
For instance, in Example \ref{ex:gaussian}, it can be shown, using orthogonal polynomials, that $\|K_{p,\alpha}\|_2 = \alpha$ \citep[see, e.g.,][]{diaconis2008gibbs}.
One can compare this rate with that in Section~\ref{sec:coupling} involving the total variation distance.
In more complex scenarios, one hopes to derive some reasonably sharp bounds on $\|K\|_2$.
Of course, to apply Theorem \ref{thm:l2} in practice, one would also need to get a handle on $\|\mu - \varpi\|_2$.
But since $\|K\|_2$ is the more important quantity in \eqref{ine:l2bound} when $t$ is large, it will be our focus herein.

We end Section \ref{ssec:l2basic} with an elementary method for bounding $\|K\|_2$ from above which can be applied to Example \ref{ex:IMH}.
More sophisticated methods will be given later.

\begin{theorem} \label{thm:l2-elementary}
	Suppose that there exists $\varepsilon > 0$ such that $K(x,A) \geq \varepsilon \varpi(A)$ for $x \in \X$ and $A \in \B$.
	Then $\|K\|_2 \leq 1 - \varepsilon$.
\end{theorem}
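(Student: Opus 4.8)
The plan is to exploit the minorization $K(x,\cdot) \geq \varepsilon \varpi(\cdot)$ by peeling off an independent component that is annihilated on the mean-zero subspace $L_0^2(\varpi)$. First I would note that setting $A = \X$ in the hypothesis forces $\varepsilon \leq 1$. The case $\varepsilon = 1$ is degenerate: it forces $K(x,A) = \varpi(A)$ for every $x$ and $A$ (since $K(x,A) \geq \varpi(A)$ and $K(x,A^c) \geq \varpi(A^c)$ must both be equalities as they sum to $1$), whence $Kf = \varpi f = 0$ for $f \in L_0^2(\varpi)$ and $\|K\|_2 = 0 = 1-\varepsilon$. So I would assume $\varepsilon \in (0,1)$ and define the residual kernel
\[
R(x, A) = \frac{K(x,A) - \varepsilon \varpi(A)}{1 - \varepsilon}, \quad x \in \X, \ A \in \B.
\]
The minorization gives $R(x,A) \geq 0$, and since $K(x,\X) = \varpi(\X) = 1$ one has $R(x,\X) = 1$, so $R$ is a genuine Markov transition kernel.

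The crucial observation is that $\varpi$ is stationary for $R$ as well. Integrating the display above against $\varpi$ and using $\varpi K = \varpi$ gives
\[
\int_{\X} \varpi(\df x) R(x, A) = \frac{\varpi K(A) - \varepsilon \varpi(A)}{1-\varepsilon} = \frac{(1-\varepsilon) \varpi(A)}{1-\varepsilon} = \varpi(A)
\]
for every $A \in \B$. Consequently $R$ defines a Markov operator on $L_0^2(\varpi)$ that obeys exactly the Jensen bound used in \eqref{ine:Jensen}; that is, writing $Rf(x) = \int_{\X} f(x') R(x, \df x')$, one has $\|Rf\|_2 \leq \|f\|_2$ for every $f \in L^2(\varpi)$.

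Finally, for $f \in L_0^2(\varpi)$ I would use $\varpi f = 0$ to eliminate the independent part:
\[
Kf(x) = \int_{\X} f(x') K(x, \df x') = \varepsilon \, \varpi f + (1-\varepsilon) \int_{\X} f(x') R(x, \df x') = (1-\varepsilon) Rf(x).
\]
Taking $L^2(\varpi)$ norms and invoking the contraction bound for $R$ yields $\|Kf\|_2 = (1-\varepsilon) \|Rf\|_2 \leq (1-\varepsilon) \|f\|_2$, and since $f \in L_0^2(\varpi)$ was arbitrary, $\|K\|_2 \leq 1-\varepsilon$. The only nonroutine step is the bound $\|Rf\|_2 \leq \|f\|_2$, and it hinges entirely on the stationarity $\varpi R = \varpi$: absent that fact, Jensen's inequality would only control $\|Rf\|_2^2$ by $\int_{\X^2} f(x')^2 R(x,\df x') \, \varpi(\df x)$, which need not reduce to $\|f\|_2^2$. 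Thus the heart of the argument is recognizing that minorizing against the stationary distribution itself (rather than an arbitrary $\nu$, as in Theorem~\ref{thm:doeblin}) makes the residual kernel $\varpi$-invariant, so the contraction factor $1-\varepsilon$ passes through cleanly.
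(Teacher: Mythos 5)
Your proof is correct and takes essentially the same route as the paper's: both decompose $K(x,\cdot) = \varepsilon\varpi(\cdot) + (1-\varepsilon)R(x,\cdot)$, observe that the residual kernel $R$ is a Markov kernel with $\varpi R = \varpi$, apply the Jensen argument of \eqref{ine:Jensen} to get $\|Rf\|_2 \leq \|f\|_2$, and conclude $\|Kf\|_2 = (1-\varepsilon)\|Rf\|_2 \leq (1-\varepsilon)\|f\|_2$ on $L_0^2(\varpi)$. Your explicit verification of $\varpi R = \varpi$ and your handling of the degenerate case $\varepsilon = 1$ simply spell out details the paper states without proof.
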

\begin{proof}
	It is clear that $\varepsilon \leq 1$.
	If $\varepsilon = 1$ then $Kf = \varpi f = 0$ for $f \in L_0^2(\varpi)$, indicating that $\|K\|_2 = 0$.
	Suppose that $\varepsilon < 1$.
	Let $R(x,A) = (1-\varepsilon)^{-1} [K(x,A) - \varepsilon \varpi(A)]$ for $x \in \X$ and $A \in \B$.
	Then $R(\cdot,\cdot)$ is an Mtk such that $\varpi R = \varpi$, and we may view $R$ as a Markov operator on $L_0^2(\varpi)$.
	Replacing $K(\cdot,\cdot)$ with $R(\cdot,\cdot)$ in \eqref{ine:Jensen}, we find that $\|R\|_2 \leq 1$.
	Thus, for $f \in L_0^2(\varpi)$, 
	\[
	\|Kf\|_2 = \|\varepsilon \varpi f + (1-\varepsilon) R f\|_2 = (1-\varepsilon) \|Rf\|_2 \leq (1-\varepsilon) \|f\|_2.
	\]
	This implies that $\|K\|_2 \leq 1 - \varepsilon$.
\end{proof}

Let us apply Theorem \ref{thm:l2-elementary} to the independent Metropolis Hastings chain in Example \ref{ex:IMH}.
Recall from \eqref{ine:doeblin-IMH} that, for $x \in \X= [0,1]$ and $A \in \B$, $K_s(x,A) \geq (1/M_s) \pi_s(A)$.
Hence, by Theorem \ref{thm:l2-elementary}, $\|K_s\|_2 \leq 1 - 1/M_s$.
In Section \ref{ssec:lower}, it will be shown that this bound is tight.

\subsection{The spectral gap and the conductance} \label{ssec:conductance}

Let $K(\cdot,\cdot)$ be reversible, so that the corresponding operator is self-adjoint.
Consider the task of bounding $\|K\|_2$, particularly from above.
Self-adjoint-ness implies that
\begin{equation} \label{eq:Knorm}
\|K\|_2 = \max \left\{ \sup_{f \in L_0^2(\varpi), \, f \neq 0} \frac{\langle f, Kf \rangle}{\|f\|_2^2}, - \inf_{f \in L_0^2(\varpi), \, f \neq 0} \frac{\langle f, Kf \rangle}{\|f\|_2^2} \right\}
\end{equation}
\citep[see, e.g.,][\S 14, Corollary 5.1]{helmberg2014introduction}.
In certain cases, it is possible to bound the second term in the maximum quite easily.
For instance, if~$K$ is associated with a random-scan Gibbs algorithm or a data augmentation algorithm, then~$K$ is positive semi-definite in the sense that it is self-adjoint, and $\langle f, Kf \rangle \geq 0$ for $f \in L_0^2(\varpi)$, so the second term is at most zero.
See Theorem 3 of \cite{liu1995covariance} and Theorem 3.2 of \cite{liu:wong:kong:1994}.
This is also the case if $K(\cdot,\cdot)$ is the two-step Mtk of some reversible chain, i.e., $K = T^2$ for some Mtk $T(\cdot,\cdot)$ that is reversible with respect to $\varpi(\cdot)$.
Finally, if the chain is lazy in the sense that $K(x, \{x\}) \geq c$ for every $x$ and some positive constant $c$, then the second term is at most $1-2c$.
For various MCMC algorithms, much effort has been spent on bounding the first term from above, or equivalently, bounding the spectral gap, defined as
\[
G(K) = 1 - \sup_{f \in L_0^2(\varpi), \, f \neq 0} \frac{\langle f, Kf \rangle}{\|f\|_2^2},
\]
from below.

It is worth mentioning that the spectral gap is also closely related to the asymptotic variance of Monte Carlo estimators.
To be more precise, let $(X_t)_{t=0}^{\infty}$ be a chain associated with~$K$, and let $f \in L^2(\varpi)$.
Then, under regularity conditions, $n^{1/2}[ n^{-1} \sum_{i=1}^n f(X_i) - \varpi f]$ is asymptotically normally distributed as $n \to \infty$, and the asymptotic variance is upper bounded by $[2-G(K)]/G(K)$.
See, e.g., \cite{chan1994discussion}, equation (7).

One important approach for bounding the spectral gap is relating it to a quantity called the ``conductance" \citep{jerrum1988conductance}.
The conductance of $K$ is defined to be
\[
\Phi_K = \inf_{A \in \B, \, 0 < \varpi(A) < 1} \phi_K(A),  \text{ where } \phi_K(A) = \frac{\int_A \varpi(\df x) K(x,A^c) }{\varpi(A) \varpi(A^c)}.
\]
Loosely speaking, $\phi_K(A)$ measures the probability flow from~$A$ to its complement $A^c$, after adjusting for the probability masses of $A$ and $A^c$.
A large conductance indicates that the chain can freely move around the state space~$\X$, and vice versa.

The conductance is related to the spectral gap through the following remarkable result, called Cheeger's inequality.

\begin{theorem} \citep[][Theorem 2.1]{lawler1988bounds} \label{thm:cheeger}
	For the reversible Mtk $K(\cdot,\cdot)$, $\Phi_K^2/8 \leq G(K) \leq \Phi_K$.
\end{theorem}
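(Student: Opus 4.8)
The plan is to recast everything through the Dirichlet form. Writing $I$ for the identity operator, reversibility together with stationarity of $\varpi$ gives, for $f \in L_0^2(\varpi)$, the identity
$\langle f, (I-K)f \rangle = \tfrac{1}{2}\int_{\X^2} [f(x)-f(y)]^2 \, K(x,\df y)\,\varpi(\df x) =: \mathcal{E}(f)$,
and since $G(K) = \inf_{f \in L_0^2(\varpi),\, f \neq 0} \langle f, (I-K)f\rangle / \|f\|_2^2$, Cheeger's inequality reduces to two estimates on the Rayleigh quotient $\mathcal{E}(f)/\|f\|_2^2$: an upper bound on its infimum (the easy direction), and a uniform lower bound over all $f$ (the hard direction).

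For the upper bound $G(K) \leq \Phi_K$, I would feed indicator test functions into the quotient. Given $A \in \B$ with $0 < \varpi(A) < 1$, take $f = \ind_A - \varpi(A) \in L_0^2(\varpi)$. A short computation gives $\|f\|_2^2 = \varpi(A)\varpi(A^c)$, while, because $I-K$ annihilates constants and $\varpi$ is stationary, $\mathcal{E}(f) = \langle \ind_A, (I-K)\ind_A \rangle = \int_A K(x,A^c)\,\varpi(\df x)$. Thus the Rayleigh quotient at this $f$ is exactly $\phi_K(A)$, so $G(K) \leq \phi_K(A)$ for every admissible $A$; taking the infimum over $A$ yields $G(K) \leq \Phi_K$.

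The lower bound $\Phi_K^2/8 \leq G(K)$ is the substantive direction, which I would handle in two stages. The first is a reduction to one-sided functions: given $f \in L_0^2(\varpi)$, let $m$ be a $\varpi$-median of $f$ and write $g = f - m = g_+ - g_-$ with $g_\pm \geq 0$; the median choice forces $\varpi(g_+ > 0) \leq 1/2$ and $\varpi(g_- > 0) \leq 1/2$. A pointwise case analysis on the signs of $g(x), g(y)$ gives $[g(x)-g(y)]^2 \geq [g_+(x)-g_+(y)]^2 + [g_-(x)-g_-(y)]^2$, whence $\mathcal{E}(f) = \mathcal{E}(g) \geq \mathcal{E}(g_+) + \mathcal{E}(g_-)$, while $\|f-m\|_2^2 = \|f\|_2^2 + m^2 \geq \|f\|_2^2$ and $\|g_+\|_2^2 + \|g_-\|_2^2 = \|g\|_2^2$. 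It therefore suffices to prove $\mathcal{E}(h) \geq (\Phi_K^2/8)\|h\|_2^2$ for every $h \geq 0$ with $\varpi(h>0) \leq 1/2$. The second stage is a co-area argument for such $h$. Applying the layer-cake identity to the level sets of $h^2$ and using reversibility gives $\int_0^\infty Q(\{h^2>t\})\,\df t = \tfrac{1}{2}\int_{\X^2} |h(x)^2 - h(y)^2|\, K(x,\df y)\,\varpi(\df x)$, where $Q(A) = \int_A K(x,A^c)\,\varpi(\df x)$. Since each level set $\{h^2>t\} \subseteq \{h>0\}$ has $\varpi$-measure at most $1/2$, the definition of $\Phi_K$ gives $Q(\{h^2>t\}) \geq (\Phi_K/2)\,\varpi(h^2>t)$; integrating in $t$ produces $\Phi_K \|h\|_2^2 \leq \int_{\X^2} |h(x)-h(y)|\,|h(x)+h(y)|\, K(x,\df y)\,\varpi(\df x)$. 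A Cauchy-Schwarz split of the right-hand side, with the $|h(x)-h(y)|$ factor contributing $\sqrt{2\mathcal{E}(h)}$ and the $|h(x)+h(y)|$ factor bounded by $2\|h\|_2$ via $(a+b)^2 \leq 2(a^2+b^2)$ and the stationarity identity $\int_{\X^2} h(x)^2 K(x,\df y)\varpi(\df x) = \|h\|_2^2$, gives $\Phi_K \|h\|_2^2 \leq 2\sqrt{2}\,\|h\|_2\sqrt{\mathcal{E}(h)}$; squaring delivers the constant $8$.

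I expect the main obstacle to be bookkeeping rather than conceptual: the reduction must land on $\|f\|_2^2$ rather than merely $\|g\|_2^2$, which is precisely why the median (not the mean) is the right center, and the factor $8$ is sensitive to how the $|h(x)+h(y)|$ term is controlled, so the Cauchy-Schwarz step and the stationarity identities must be tracked with care. The only genuinely delicate analytic point is justifying the co-area identity and the attendant interchange of integrals in full measure-theoretic generality on the Polish space $\X$, but since all integrands are nonnegative this is handled by Tonelli's theorem.
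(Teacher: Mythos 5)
Your proposal is correct, and the easy direction (indicator test functions giving $G(K) \leq \phi_K(A)$) coincides with the paper's argument. For the substantive direction $\Phi_K^2/8 \leq G(K)$, however, you take a genuinely different route. You use the classical median reduction: center $f$ at a $\varpi$-median, split into positive and negative parts $g_\pm$, use the pointwise inequality $[g(x)-g(y)]^2 \geq [g_+(x)-g_+(y)]^2 + [g_-(x)-g_-(y)]^2$ to reduce to nonnegative functions supported on sets of measure at most $1/2$, and then exploit the resulting one-sided bound $Q(A) \geq (\Phi_K/2)\,\varpi(A)$ inside the co-area identity before applying Cauchy--Schwarz. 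The paper never introduces a median or a positive/negative decomposition; instead it keeps a free shift parameter $s$, applies Cauchy--Schwarz and the layer-cake formula to $f_s = f - s$ while retaining the full product $\varpi(A)\varpi(A^c)$ in the conductance bound, and closes by proving an elementary inequality for iid copies $X, Y$ of $f(W)$, $W \sim \varpi$: combining the $s \to \infty$ limit (which yields $2E|X|$) with the evaluation at $s = 0$ (which yields $2\sigma - 2E|X|$) shows the relevant supremum is at least $\sigma$. Your route is the standard Lawler--Sokal-style textbook argument, where the constant $8$ emerges transparently from a single Cauchy--Schwarz split and the factor-of-$2$ normalization supplied by the median; the paper's route avoids the decomposition bookkeeping entirely at the cost of a less standard probabilistic lemma at the end. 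Both proofs hinge on the same two tools (co-area and Cauchy--Schwarz) and reach the same constant. One minor remark: the Dirichlet-form identity $\langle f, (I-K)f\rangle = \tfrac12 \int_{\X^2} [f(x)-f(y)]^2\, K(x,\df y)\,\varpi(\df x)$ needs only stationarity, not reversibility, though attributing it to both is harmless.
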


\begin{proof}
	For $A \in \B$ and $x \in \X$, let $\eta_A(x) = \ind_{x \in A} - \varpi(A)$.
	Then $\eta_A \in L_0^2(\varpi)$, and it is straightforward to check that
	\[
	\phi_K(A) = 1 - \frac{\langle \eta_A, K \eta_A \rangle}{\|\eta_A\|_2^2}, \quad A \in \B.
	\]
	Then $G(K) \leq \phi_K(A)$ for any $A$ and thus, $G(K) \leq \Phi_K$.
	
	We now establish the other inequality.
	Using the fact that $\varpi K = \varpi$, one can obtain
	\begin{equation} \label{eq:cheeger-0}
	\begin{aligned}
		G(K) &= \inf_{f \in L_0^2(\varpi), \, f \neq 0} \frac{ \int_{\X} f(x)^2 \, \varpi(\df x) - \int_{\X^2} f(x) f(x') K(x, \df x') \, \varpi(\df x) }{\|f\|_2^2} \\
		&= \frac{1}{2} \inf_{f \in L_0^2(\varpi), \, f \neq 0} \frac{ \int_{\X^2} [f(x) - f(x')]^2 K(x, \df x') \, \varpi(\df x) }{\|f\|_2^2}
	\end{aligned}
	\end{equation}
	For now, fix $f \in L_0^2(\varpi)$ such that $f \neq 0$ and $s \in \mathbb{R}$. 
	Let $f_s(x) = f(x) - s$ for $x \in \X$.
	By the Cauchy-Schwarz inequality, 
	\begin{equation} \label{eq:cheeger-2}
	\begin{aligned}
		\int_{\X^2} [f(x) - f(x')]^2 K(x, \df x') \, \varpi(\df x) &= \int_{\X^2} [f_s(x) - f_s(x')]^2 K(x, \df x') \, \varpi(\df x) \\
		&\geq \frac{\left\{ \int_{\X^2} |f_s(x)^2 - f_s(x')^2| K(x, \df x') \, \varpi(\df x) \right\}^2 }{ \int_{\X^2} [f_s(x) + f_s(x')]^2 K(x, \df x') \, \varpi(\df x) } \\
		&\geq \frac{\left\{ \int_{\X^2} |f_s(x)^2 - f_s(x')^2| K(x, \df x') \, \varpi(\df x) \right\}^2 }{ 2 \int_{\X^2} [f_s(x)^2 + f_s(x')^2] K(x, \df x') \, \varpi(\df x) } \\
		&= \frac{\left\{ \int_{\X^2} |f_s(x)^2 - f_s(x')^2| K(x, \df x') \, \varpi(\df x) \right\}^2 }{ 4 \|f_s\|_2^2 }
	\end{aligned}
	\end{equation}
	Let $A_{s,f}(t) = \{ x: \, f_s(x)^2 \geq t \}$ for $t \in [0,\infty)$.
	Then
	\begin{equation} \label{eq:cheeger-3}
	\begin{aligned}
		&\int_{\X^2} |f_s(x)^2 - f_s(x')^2| K(x, \df x') \, \varpi(\df x) \\
		=& \int_{\X^2} \ind_{f_s(x')^2 < f_s(x)^2} \int_{f_s(x')^2}^{f_s(x)^2}  \df t \, K(x, \df x') \, \varpi(\df x) + \int_{\X^2} \ind_{f_s(x)^2 < f_s(x')^2} \int_{f_s(x)^2}^{f_s(x')^2}  \df t \, K(x, \df x') \, \varpi(\df x) \\
		=& \int_0^{\infty} \int_{\X^2} \ind_{f_s(x')^2 < t \leq f_s(x)^2}  \, K(x, \df x') \, \varpi(\df x) \, \df t + \int_0^{\infty} \int_{\X^2} \ind_{f_s(x)^2 < t \leq f_s(x')^2}   \, K(x, \df x') \, \varpi(\df x) \, \df t \\
		=& \int_0^{\infty} \int_{A_{s,f}(t)} \varpi(\df x) K(x, A_{s,f}(t)^c) \, \df t + \int_0^{\infty} \int_{A_{s,f}(t)^c} \varpi(\df x) K(x, A_{s,f}(t)) \, \df t \\
		\geq& 2 \Phi_K \int_0^{\infty} \varpi(A_{s,f}(t)) [1 - \varpi(A_{s,f}(t))] \, \df t.
	\end{aligned}
	\end{equation}
	A similar calculation reveals that
	\begin{equation} \label{eq:cheeger-4}
	\int_{\X^2} |f_s(x)^2 - f_s(x')^2| \, \varpi(\df x') \, \varpi(\df x) = 2  \int_0^{\infty} \varpi(A_{s,f}(t)) [1 - \varpi(A_{s,f}(t))] \, \df t.
	\end{equation}
	Letting~$f$ and~$s$ vary, we have the following:
	\begin{equation} \nonumber
	\begin{aligned}
		G(K) &\geq \inf_{f \in L_0^2(\varpi), \, f \neq 0} \sup_{s \in \mathbb{R}} \frac{ \left\{ \int_{\X^2} |f_s(x)^2 - f_s(x')^2| K(x, \df x') \, \varpi(\df x) \right\}^2 }{8 \|f\|_2^2 \|f_s\|_2^2 } \quad \text{by } \eqref{eq:cheeger-0} \text{ and } \eqref{eq:cheeger-2} \\
		& \geq \frac{\Phi_K^2}{8} \inf_{f \in L_0^2(\varpi), \, f \neq 0} \sup_{s \in \mathbb{R}} \left( \frac{\int_{\X^2} |f_s(x)^2 - f_s(x')^2| \, \varpi(\df x') \, \varpi(\df x) }{\|f\|_2 \|f_s\|_2 } \right)^2 \quad \text{by } \eqref{eq:cheeger-3} \text{ and } \eqref{eq:cheeger-4} \\
		&= \frac{\Phi_K^2}{8} \inf_{f \in L_0^2(\varpi), \, f \neq 0} \sup_{s \in \mathbb{R}} \left( \frac{E[|(X-s)^2 - (Y-s)^2|]}{ \sqrt{\mbox{var} (X)} \sqrt{E[(X - s)^2]} } \right)^2,
	\end{aligned}
	\end{equation}
	where $X$ and $Y$ are independently and identically (iid) distributed as $\varpi \circ f^{-1}$, i.e., the distribution of $f(W)$ with $W \sim \varpi$.
	Note that $E(X) = \varpi f = 0$, and $\mbox{var}(X) = \|f\|_2^2 \in (0,\infty)$.
	
	To conclude the proof, it suffices to show that, for two iid random variables $X$ and $Y$ with mean zero and some standard deviation $\sigma > 0$, 
	\begin{equation} \label{ine:cheeger-a}
	\sup_{s \in \mathbb{R}}  \frac{E[|(X-s)^2 - (Y-s)^2|]}{\sigma \sqrt{E[(X - s)^2]} }  \geq 1.
	\end{equation}
	By the dominated convergence theorem, $\lim_{s \to \infty} E[s^{-2}(X-s)^2] = 1$, while
	\[
	\begin{aligned}
		\lim_{s \to \infty} E[s^{-1}|(X-s)^2 - (Y-s)^2|] &= 2 E[|X-Y|] \\
		&= 2 E[E(|X-Y| \mid X)] \\
		&\geq 2 E[| E(X-Y \mid X) |] \\
		&= 2 E(|X|) \quad \text{since } E(Y \mid X) = E(Y) = 0.
	\end{aligned}
	\]
	Thus,
	\begin{equation} \label{ine:cheeger-b}
	\sup_{s \in \mathbb{R}}  \frac{E[|(X-s)^2 - (Y-s)^2|]}{\sqrt{E[(X - s)^2]}}  \geq \lim_{s \to \infty} \frac{E[|(X-s)^2 - (Y-s)^2|]}{\sqrt{E[(X - s)^2]}} \geq 2 E(|X|).
	\end{equation}
	On the other hand, using the assumption that $E(X^2) = E(Y^2) = \sigma^2$ and the fact that $|u^2 - \sigma^2| \geq (|u|-\sigma)^2$ for $u \in \mathbb{R}$, we have
	\[
	E(|X^2 - Y^2|) \geq E[|E(X^2 - Y^2 \mid X)|] = E(|X^2 - \sigma^2|) \geq E[(|X|-\sigma)^2] = 2 \sigma^2 - 2 \sigma E(|X|).
	\]
	Thus,
	\begin{equation} \label{ine:cheeger-c}
	\sup_{s \in \mathbb{R}}  \frac{E[|(X-s)^2 - (Y-s)^2|]}{\sqrt{E[(X - s)^2]}}  \geq \frac{E|(X-0)^2 - (Y-0)^2|}{\sqrt{E[(X-0)^2]}} \geq 2\sigma - 2E(|X|).
	\end{equation}
	Combining \eqref{ine:cheeger-b} and~\eqref{ine:cheeger-c} gives \eqref{ine:cheeger-a}.
\end{proof}

\subsection{Bounds via isoperimetric inequalities} \label{ssec:isoperimatric}

We now describe a method for bounding $\Phi_K$, and in turn, $G(K)$ from below.
The method is particularly powerful when $\varpi(\cdot)$ admits a log-concave density function.
It is based on a certain type of isoperimetric inequality.
Let $\mbox{dist}: \B^2 \to [0,\infty]$ be some function that quantifies how far two sets are from each other.
We say $\varpi(\cdot)$ satisfies a three-set isoperimetric inequality of Cheeger type if one can find some $\delta \in (0,\infty)$ and $\kappa \in (0,\infty)$ such that, for any partition of $\X$ consisting of three measurable sets, say, $\{S_1, S_2, S_3\}$,
\begin{equation} \label{ine:iso}
 \varpi(S_3) \geq 
 	\kappa \, \varpi(S_1) \varpi(S_2)  \text{ whenever } \mbox{dist}(S_1, S_2) \geq \delta.
\end{equation}
Ideally, $\kappa$ is not close to zero, especially if $\delta$ is large.
This would indicate that, if two sets $S_1$ and $S_2$ are not too close, then $S_3 = (S_1 \cup S_2)^c$ must have a non-negligible probability mass relative to the masses of $S_1$ and $S_2$.
Loosely speaking, this means that the state space~$\X$ cannot exhibit two disjoint subdomains, each possessing substantial probability mass, where transitioning from one subdomain to the other necessitates covering extensive distances through a low-probability region.

\begin{figure}
\begin{center}
 {\includegraphics[height=2in]{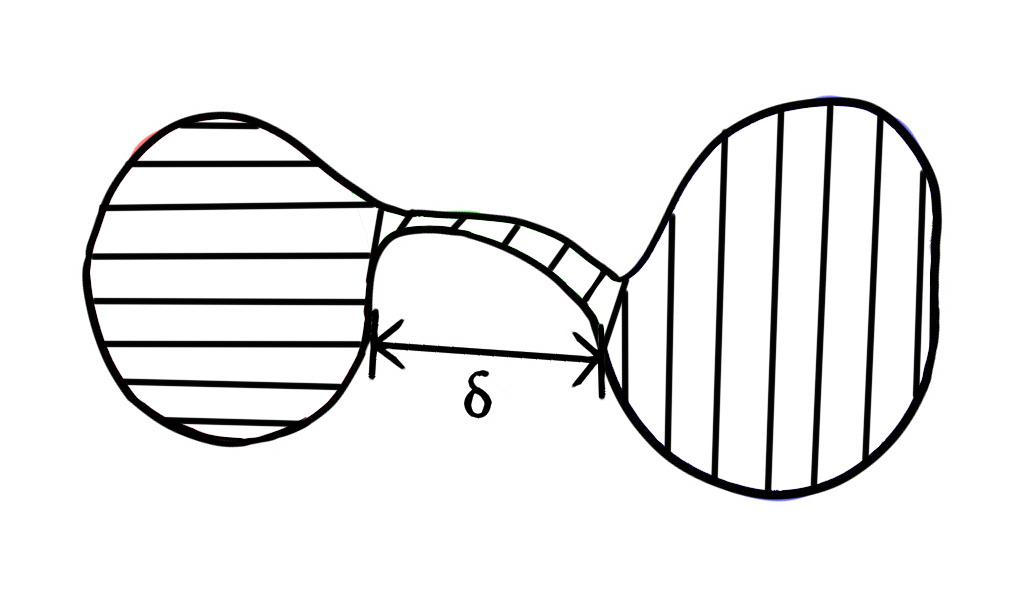}} 
 \caption{A dumbbell-shaped domain partitioned into three parts.
 The sets $S_1$ (horizontal stripes) and $S_2$ (vertical stripes) are separated by a narrow corridor $S_3$ (diagonal stripes).
 } \label{fig:isoperimetric}
\end{center}
\end{figure}  
Figure~\ref{fig:isoperimetric} shows a scenario where a good isoperimetric inequality, i.e., one with a large value of $\kappa$, would eliminate.
Here, a dumbbell-shaped domain~$\X$ is presented.
Let $\varpi(\cdot)$ be the uniform distribution on~$\X$.
We can partition~$\X$ into three regions, $S_1$, $S_2$, and $S_3$, which are filled with, respectively, horizontal, vertical, and diagonal stripes.
Imagine that $\delta = \mbox{dist}(S_1, S_2)$, and $\varpi(S_3)$ is small relative to $\varpi(S_1) \varpi(S_2)$.
Then \eqref{ine:iso} cannot hold for large values of $\kappa$.
Scenarios like this could prevent the fast mixing of some Markov chains whose stationary distributions are $\varpi(\cdot)$.
Indeed, it may be difficult for a chain to travel from $S_1$ to $S_2$ due to how narrow the corridor connecting the two sets is.

There is a large literature devoted to establishing isoperimetric inequalities, especially for distributions on Euclidean spaces with log-concave density functions.
We will not attempt to establish an inequality ourselves since this typically requires some sophisticated analysis.
Instead, we state, without proof, a well-known inequality for distributions with strongly log-concave density functions \citep[see, e.g.,][]{ledoux1999concentration,bobkov2003localization}.

\begin{theorem} \citep[][Theorem~1]{bobkov2003localization} \label{thm:iso}
	Let $\X = \mathbb{R}^p$ for some $p \in \mathbb{N}_+$.
	Suppose that $\varpi$ admits a probability density function with respect to the Lebesgue measure that is proportional to $\exp[-\|x\|^2/(2\sigma^2)] g(x)$, where $\sigma > 0$, and $x \mapsto g(x)$ is log-concave, i.e., for all $x, y \in \mathbb{R}^p$ and $\lambda \in (0,1)$, the inequality $g(\lambda x + (1-\lambda)y) \geq g(x)^{\lambda} g(y)^{1-\lambda}$ holds.
	Let $\{S_1, S_2, S_3\}$ be a measurable partition of $\mathbb{R}^p$.
	Then, for $i = 1,2$,
	\[
	\varpi(S_3) \geq F_N \left(F_N^{-1} (\varpi(S_i)) + \frac{\mbox{dist} \,(S_1, S_2)}{\sigma} \right) - \varpi(S_i),
	\]
	where $F_N(\cdot)$ is the cumulative distribution function of the standard normal distribution, and $\mbox{dist}(S_1, S_2) = \inf_{x \in S_1, y \in S_2} \|x-y\|$.
\end{theorem}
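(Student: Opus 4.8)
The plan is to derive the three-set statement from a single-set concentration inequality carrying the Gaussian isoperimetric profile, and then to prove that concentration inequality by exploiting the strong log-concavity of $\varpi$. First I would reduce to showing that, for every measurable $A \subseteq \mathbb{R}^p$ and every $\delta > 0$, the open neighborhood $A^\delta = \{x : \inf_{y \in A}\|x - y\| < \delta\}$ satisfies the bound (call it $(\star)$)
\[
\varpi(A^\delta) \geq F_N\!\left(F_N^{-1}(\varpi(A)) + \delta/\sigma\right).
\]
Granting $(\star)$, the theorem follows at once: taking $A = S_i$ and $\delta = \mbox{dist}(S_1, S_2)$, the defining property of the set distance forces the other of $S_1, S_2$ to be disjoint from $A^\delta$, so that $A^\delta \subseteq S_i \cup S_3$ and hence $\varpi(S_i) + \varpi(S_3) \geq \varpi(A^\delta)$; rearranging $(\star)$ then gives the claimed bound for $i = 1, 2$.

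The content of $(\star)$ is that $\varpi$ concentrates at least as strongly as $\mbox{N}_p(0, \sigma^2 I_p)$, and this is where the hypotheses enter. Writing $\varpi \propto e^{-V}$ with $V(x) = \|x\|^2/(2\sigma^2) - \log g(x)$, log-concavity of $g$ gives $\nabla^2 V \succeq \sigma^{-2} I_p$, i.e.\ $\varpi$ has Bakry--\'Emery curvature at least $\sigma^{-2}$. I would prove $(\star)$ by the localization method of Lov\'asz and Simonovits, which reduces an isoperimetric inequality over a log-concave measure on $\mathbb{R}^p$ to the corresponding one-dimensional statement for measures supported on line segments. The crucial structural point is that restricting the density $e^{-\|x\|^2/(2\sigma^2)} g(x)$ to a unit-speed line leaves a one-dimensional density of the same form $\propto e^{-t^2/(2\sigma^2)} \tilde{g}(t)$ with $\tilde{g}$ log-concave, so the coefficient $\sigma^{-2}$, which sets the scale in $(\star)$, is preserved under the reduction.

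It then remains to prove the one-dimensional inequality, for which I would use the increasing transport map $G = F_N^{-1} \circ F$ from $\varpi$ (with distribution function $F$ and density $\rho = F'$) to the standard normal. In one dimension $(\star)$ reduces to $G(a + \delta) - G(a) \geq \delta/\sigma$ for half-lines, so it suffices to show $G' \geq 1/\sigma$ pointwise. Differentiating $F_N(G) = F$ twice yields the identity $G'' = G'(G G' - V')$, where $V = -\log \rho$. Setting $w = G' > 0$ and differentiating once more, one finds that at any interior local minimum $t^\ast$ of $w$ (where $w' = 0$, hence $G''(t^\ast) = 0$) the second-derivative test gives $w''(t^\ast) = w(t^\ast)\,[\,w(t^\ast)^2 - V''(t^\ast)\,] \geq 0$; since $V'' \geq 1/\sigma^2$ by strong log-concavity, this forces $w(t^\ast) \geq 1/\sigma$. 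Thus every interior minimum of $w$ already meets the desired bound, which is the clean analytic core of the argument.

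The main obstacle is twofold. First, the maximum-principle argument controls only interior extrema of $w$, so I would need a separate tail analysis establishing $\liminf_{t \to \pm\infty} G'(t) \geq 1/\sigma$ (using that $V'' \geq \sigma^{-2}$ forces the tails of $\rho$ to be no heavier than Gaussian), so that the global infimum of $w$ is indeed controlled. Second, the localization reduction must be arranged so that the neighborhood operation $A \mapsto A^\delta$ and the distance constraint survive the passage to one-dimensional needles; this bookkeeping is the genuinely delicate step and is precisely what makes results of this kind nontrivial. As an alternative organization of the same curvature input, one could bypass localization and prove $(\star)$ through the semigroup approach of Bakry and Ledoux: the condition $\nabla^2 V \succeq \sigma^{-2} I_p$ yields Bobkov's functional inequality $\mathcal{I}(\varpi f) \leq \int \sqrt{\sigma^2 |\nabla f|^2 + \mathcal{I}(f)^2}\,\df\varpi$ for the Gaussian isoperimetric function $\mathcal{I} = \phi \circ F_N^{-1}$, whence $(\star)$ follows by a co-area argument; there the technical heart migrates to a $\Gamma_2$ computation along the heat semigroup of $\varpi$.
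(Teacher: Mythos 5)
There is nothing in the paper to compare against here: the chapter states Theorem \ref{thm:iso} \emph{without proof}, importing it from \cite{bobkov2003localization} precisely because, as the text says, establishing such inequalities ``typically requires some sophisticated analysis.'' Judged against the literature, your outline is the right one. The reduction of the three-set statement to the neighborhood inequality $(\star)$ is correct and complete: with $A = S_i$ and $\delta = \mbox{dist}(S_1,S_2)$, every $x$ in the other set $S_j$ satisfies $\inf_{y \in S_i}\|x - y\| \geq \delta$, so $S_j \cap A^\delta = \emptyset$, hence $A^\delta \subseteq S_i \cup S_3$ and $(\star)$ rearranges to the claim (the cases $\delta = 0$ and $\varpi(S_i) \in \{0,1\}$ being trivial). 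The curvature identification $\nabla^2 V \succeq \sigma^{-2} I_p$, the observation that needles inherit densities of the form $e^{-t^2/(2\sigma^2)}\tilde{g}(t)$ with $\tilde{g}$ log-concave, and the one-dimensional identity $G'' = G'(GG' - V')$ with its consequence $w(t^\ast) \geq 1/\sigma$ at interior minima are all verified correctly; moreover, the localization route you propose is exactly the method of the cited source, and the Bakry--Ledoux semigroup argument is the other standard proof.

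The caveat is that what you have written is a plan rather than a proof, and the items you defer are where all the difficulty sits. Concretely: (i) the step ``in one dimension $(\star)$ reduces to half-lines'' silently invokes the extremality of half-lines for one-dimensional log-concave measures (a nontrivial theorem of Bobkov in its own right), or else this reduction must be built into the localization step; (ii) the maximum principle controls only interior minima of $w = G'$, and the tail bound $\liminf_{t \to \pm\infty} G'(t) \geq 1/\sigma$ that you flag is a genuine analytic step, not a formality; (iii) the localization bookkeeping --- making the partition and the distance constraint descend to needles while preserving the constant $\sigma^{-2}$ --- is the technical heart of the cited paper, and you give no details. None of these is a wrong turn, and all are known to work; but as it stands your argument establishes the theorem only modulo these standard-but-substantial inputs, which is essentially the same position the chapter itself takes by citing the result rather than proving it.
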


We can get an inequality of the form \eqref{ine:iso} from Theorem~\ref{thm:iso} using some calculus.
Let $r = \min \{\varpi(S_1), \varpi(S_2)\}$ so that $F_N^{-1}(r) \leq 0$, and denote by $f_N(\cdot)$ the density function of the standard normal distribution.
Under the assumption of Theorem \ref{thm:iso}, if $\mbox{dist}(S_1, S_2) \geq \delta$ for some $\delta \in (0,\infty)$, then
\[
\begin{aligned}
	\varpi(S_3) &\geq \int_{F_N^{-1}(r)}^{F_N^{-1}(r) + \delta/\sigma} f_N(t) \, \df t \\
	&\geq \begin{cases}
		f_N \left(F_N^{-1}(r) + \delta/\sigma \right) \, \delta/\sigma & \text{if }
		F_N^{-1}(r) + \delta/\sigma \geq - F_N^{-1}(r) \geq 0 \\
		f_N(F_N^{-1}(r)) \, \delta/\sigma & \text{otherwise}
	\end{cases} \\
	&\geq \begin{cases}
		f_N \left(\delta/\sigma \right) \, \delta/\sigma & \text{if }
		F_N^{-1}(r) + \delta/\sigma \geq - F_N^{-1}(r) \geq 0 \\
		f_N(F_N^{-1}(r)) \, \delta/\sigma & \text{otherwise}
	\end{cases} \\
	&\geq \sqrt{2\pi} \, f_N(F_N^{-1}(r)) \, f_N \left( \delta/\sigma \right) \frac{\delta}{\sigma}.
\end{aligned}
\]
By (4) in \cite{sampford1953some}, one can show that $f_N(q)/[1-F_N(q)] \geq 4 F_N(q)/\sqrt{2\pi}$ for $q \geq 0$.
Letting $q = -F_N^{-1}(r)$ yields $f_N(F_N^{-1}(r)) \geq 4 r (1-r)/\sqrt{2\pi}$.
Moreover, $r(1-r) \geq \varpi(S_1) \varpi(S_2)$.
Thus, for an arbitrary choice of $\delta \in (0,\infty)$,
\[
	\varpi(S_3) \geq \frac{4 \delta f_N \left( \delta/\sigma \right)  }{\sigma}  \varpi(S_1) \varpi(S_2)  \text{ whenever } \mbox{dist}(S_1, S_2) \geq \delta,
\]
i.e., \eqref{ine:iso} holds with $\delta \in (0,\infty)$ and $\kappa = 4 (\delta/\sigma) f_N(\delta/\sigma) $.

For other examples of three-set isoperimetric inequalities, see, e.g., Theorem 2.6 of \cite{lovasz1993random}, Theorem 2.1 of \cite{kannan1996sampling}, Theorem 1 of \cite{lovasz1999hit}, and Theorem 4.2 of \cite{cousins2014cubic}.
Three-set isoperimetric inequalities can also be obtained through more standard forms of isoperimetric inequalities that involve the perimeter and volume of an arbitrary measurable set \citep[see, e.g.,][]{bobkov1995some,andrieu2022explicit}.

We now give a bound on $G(K)$ based on a three-set isoperimetric inequality of Cheeger type.
It is largely similar to existing results from, e.g., \cite{lovasz1999hit}, \cite{belloni2009computational}, and \cite{dwivedi2019log}.  

\begin{theorem} \label{thm:isogap}
	Let $\dist': \X^2 \to [0,\infty)$ be a measurable function (not necessarily a metric), and for $A, B \in \B$, let
	\[
	\mbox{dist}(A,B) = \inf_{x \in A, \, y \in B} \dist'(x,y).
	\]
	Suppose that there exist $\delta \in (0,\infty)$ and $\varepsilon \in (0,1]$ such that the following ``close coupling condition" holds:
	\begin{equation} \label{ine:closecoupling}
		\|\delta_x K - \delta_y K \|_{\tv} \leq 1 - \varepsilon \; \text{ whenever } \dist'(x,y) < \delta.
	\end{equation}
	(Recall that $\delta_x$ is the point mass at $x$, so $\delta_x K(\cdot) = K(x,\cdot)$.)
	Suppose further that $\varpi(\cdot)$ satisfies a three-set isoperimetric inequality of Cheeger type with $\delta$ given above and some $\kappa \in (0,\infty)$. 
	Then, for $A \in \B$ such that $\varpi(A) \in (0,1)$ and $a \in (0,1)$,
	\begin{equation} \label{ine:isogap-0}
		\phi_K(A) := \frac{\int_A \varpi(\df x) K(x, A^c)}{\varpi(A) \varpi(A^c)} \geq  \varepsilon \min \left\{ \frac{1-a}{2}, \frac{a^2 \kappa}{4} \right\}.
	\end{equation}
	This inequality holds even when $K(\cdot,\cdot)$ is non-reversible.
\end{theorem}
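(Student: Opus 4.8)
The plan is to build, for a fixed set $A$ with $\varpi(A) \in (0,1)$, a three-set partition of $\X$ tailored to $A$, use the close coupling condition \eqref{ine:closecoupling} to certify that the two ``interior'' pieces lie at least $\delta$ apart, and then balance the isoperimetric inequality \eqref{ine:iso} against an elementary lower bound on the ergodic flow out of $A$. First I would carve off the deep interiors of $A$ and $A^c$ using the threshold $\varepsilon/2$:
\[
S_1 = \{x \in A : K(x,A^c) < \varepsilon/2\}, \quad S_2 = \{x \in A^c : K(x,A) < \varepsilon/2\}, \quad S_3 = \X \setminus (S_1 \cup S_2).
\]
Since $S_1 \subseteq A$ and $S_2 \subseteq A^c$ are disjoint, $\{S_1, S_2, S_3\}$ is a measurable partition, and one has the clean identifications $A \cap S_3 = A \setminus S_1$ and $A^c \cap S_3 = A^c \setminus S_2$.

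\textbf{Separation step.} The conceptual crux is to show $\mbox{dist}(S_1,S_2) \geq \delta$. For $x \in S_1$ and $y \in S_2$, testing the total variation distance against the set $A$ gives $\|\delta_x K - \delta_y K\|_{\tv} \geq K(x,A) - K(y,A) > (1-\varepsilon/2) - \varepsilon/2 = 1-\varepsilon$. The contrapositive of \eqref{ine:closecoupling} then forces $\dist'(x,y) \geq \delta$, so the set distance between $S_1$ and $S_2$ is at least $\delta$. I expect the main obstacle to be exactly this calibration: the threshold $\varepsilon/2$ must be chosen so that the total variation gap strictly exceeds $1-\varepsilon$, which is what licenses the use of the close coupling hypothesis through its contrapositive.

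\textbf{Flow bounds via stationarity.} Next I would record that, because $\varpi K = \varpi$, the forward and backward flows agree, $\int_A \varpi(\df x)\,K(x,A^c) = \int_{A^c} \varpi(\df x)\,K(x,A) =: Q$, an identity requiring no reversibility. On $A \cap S_3 = A \setminus S_1$ one has $K(x,A^c) \geq \varepsilon/2$, and on $A^c \cap S_3$ one has $K(x,A) \geq \varepsilon/2$; restricting the two flow integrals to these pieces yields $Q \geq (\varepsilon/2)\,\varpi(A \cap S_3)$ and $Q \geq (\varepsilon/2)\,\varpi(A^c \cap S_3)$, and adding them gives $Q \geq (\varepsilon/4)\,\varpi(S_3)$.

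\textbf{Case analysis.} Finally I would fix $a \in (0,1)$ and split on the sizes of the interiors. If either is small, say $\varpi(S_1) < a\,\varpi(A)$, then $\varpi(A \cap S_3) > (1-a)\varpi(A)$, so $Q \geq (\varepsilon/2)(1-a)\varpi(A)$, and dividing by $\varpi(A)\varpi(A^c)$ together with $\varpi(A^c) \leq 1$ gives $\phi_K(A) \geq \varepsilon(1-a)/2$; the case $\varpi(S_2) < a\,\varpi(A^c)$ is symmetric via the backward flow. Otherwise $\varpi(S_1) \geq a\,\varpi(A)$ and $\varpi(S_2) \geq a\,\varpi(A^c)$, so both pieces have positive mass and are nonempty; the separation step then lets me invoke \eqref{ine:iso} to obtain $\varpi(S_3) \geq \kappa\,\varpi(S_1)\varpi(S_2) \geq \kappa a^2\,\varpi(A)\varpi(A^c)$, whence $Q \geq (\varepsilon \kappa a^2/4)\,\varpi(A)\varpi(A^c)$ and $\phi_K(A) \geq \varepsilon \kappa a^2/4$. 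Taking the smaller of the two regime bounds yields \eqref{ine:isogap-0}, and since only stationarity (not reversibility) entered, the bound holds for non-reversible $K$ as well.
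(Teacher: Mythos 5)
Your proposal is correct and follows essentially the same route as the paper's proof: the same partition $S_1, S_2, S_3$ with the $\varepsilon/2$ threshold, the same total-variation calibration $K(x,A) - K(y,A) > 1-\varepsilon$ to invoke the contrapositive of the close coupling condition, the same use of stationarity to equate the forward and backward flows, and the same three-case analysis yielding the two regime bounds $\varepsilon(1-a)/2$ and $\varepsilon a^2 \kappa/4$. The only differences are cosmetic (strict versus non-strict inequalities delimiting the cases, and packaging the two flow bounds into a single quantity $Q$ before the case split), so nothing further is needed.
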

\begin{proof}
	Let $A \in \B$ be such that $\varpi(A) \in (0,1)$.
	Let 
	\[
	S_1 = \{ x \in A: \, K(x, A^c) < \varepsilon/2 \}, \quad S_2 = \{ x \in A^c: \, K(x, A) < \varepsilon/2 \}, \quad S_3 = (S_1 \cup S_2)^c.
	\]
	See Figure~\ref{fig:isoperimetric-proof}.
	Fix $a \in (0,1)$.
	We will establish \eqref{ine:isogap-0} in three cases: (i) $\varpi(S_1) \leq a \varpi(A)$, (ii) $\varpi(S_2) \leq a \varpi(A^c)$, and (iii) $\varpi(S_1) > a \varpi(A)$ and $\varpi(S_2) > a \varpi(A^c)$.
	Note that these three cases exhaust all possibilities.
	
	\begin{figure}
		\begin{center}
			{\includegraphics[height=2in]{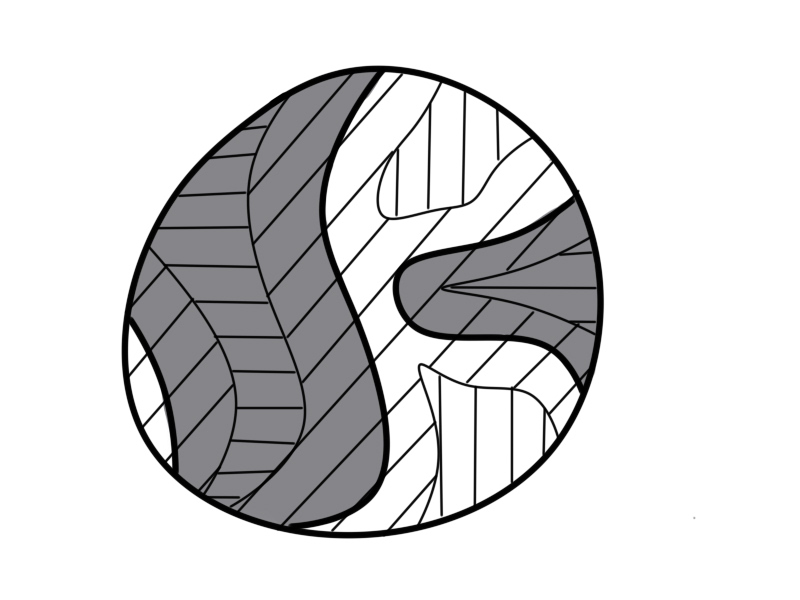}} 
			\caption{A domain partitioned into $A$ (grey) and $A^c$.
				Depending on the amount of probability flow from each point in $A$ (resp. $A^c$) to $A^c$ (resp. $A$), the domain can be alternatively partitioned into
				$S_1$ (horizontal stripes), $S_2$ (vertical stripes), and $S_3$ (diagonal stripes).
			} \label{fig:isoperimetric-proof}
		\end{center}
	\end{figure}
	
	{\noindent \bf Case (i):}
	By the definition of $S_3$,
	\begin{equation} \label{ine:isogap-1}
		\begin{aligned}
			\int_A \varpi(\df x) K(x, A^c)  \geq \int_{S_3 \cap A} \varpi(\df x) K(x, A^c) 
			\geq \frac{\varepsilon}{2} \varpi(S_3 \cap A) .
		\end{aligned}
	\end{equation}
	In Case (i),
	\[
	\varpi(S_3 \cap A) = \varpi(A) - \varpi(S_1) \geq (1-a) \varpi(A) \geq (1-a) \varpi(A) \varpi(A^c) ,
	\]
	so, by \eqref{ine:isogap-1}, we have \eqref{ine:isogap-0}.
	
	{\noindent \bf Case (ii):}
	Since $\varpi K = \varpi$,
	\[
	\begin{aligned}
		\int_A \varpi(\df x) K(x, A^c) &= \int_{\X} \varpi(\df x) K(x,A^c) - \int_{A^c} \varpi(\df x) K(x, A^c) \\
		&= \varpi(A^c) - \int_{A^c} \varpi(\df x) [1 - K(x,A)] \\
		&= \int_{A^c} \varpi(\df x) K(x,A).
	\end{aligned}
	\]
	Then
	\begin{equation} \label{ine:isogap-2}
		\int_A \varpi(\df x) K(x, A^c) \geq \int_{S_3 \cap A^c} \varpi(\df x) K(x,A) \geq \frac{\varepsilon}{2} \varpi(S_3 \cap A^c) .
	\end{equation}
	In Case (ii),
	\[
	\varpi(S_3 \cap A^c) = \varpi(A^c) - \varpi(S_2) \geq (1-a) \varpi(A^c) \geq (1-a) \varpi(A) \varpi(A^c),
	\]
	so, by \eqref{ine:isogap-2}, we have \eqref{ine:isogap-0}.

	{\noindent \bf Case (iii):}	
	By the definition of the total variation distance (see Section \ref{sec:intro}), for $x \in S_1$ and $y \in S_2$,
	\[
	\|\delta_x K - \delta_y K\|_{\tv} \geq K(x, A) - K(y,A) > 1 - \varepsilon.
	\]
	By the close coupling condition \eqref{ine:closecoupling}, $\dist'(x,y) \geq \delta$ for $x \in S_1$ and $y \in S_2$, so $\mbox{dist}(S_1, S_2) \geq \delta$.
	By the isoperimetric inequality,
	\begin{equation} \nonumber
	\varpi(S_3) \geq \kappa \varpi(S_1) \varpi(S_2).
	\end{equation}
	Note that \eqref{ine:isogap-1} and \eqref{ine:isogap-2} still hold.
	Combining them with the display above yields
	\begin{equation} \label{ine:isogap-3}
	\int_A \varpi(\df x) K(x, A^c) \geq \frac{\varepsilon \varpi(S_3)}{4} \geq \frac{\varepsilon \kappa }{4} \varpi(S_1) \varpi(S_2)
	\end{equation}
	In Case (iii),
	\[
	\varpi(S_1) \varpi(S_2) \geq a^2 \varpi(A) \varpi(A^c),
	\]
	so, by \eqref{ine:isogap-3}, we have \eqref{ine:isogap-0}.

\end{proof}

\subsubsection{Application to the Gaussian chain}

Recall that, in Example \ref{ex:gaussian}, $\X = \mathbb{R}^p$, $\varpi_p(\cdot)$ is the $\mbox{N}_p(0, I_p)$ distribution, and $K_{p,\alpha}(x,\cdot)$ is the $\mbox{N}_p(\alpha x, (1-\alpha^2) I_p)$ distribution for $x \in \mathbb{R}^p$, where $\alpha \in [0,1)$.
We can place an upper bound on $\|K_{p,\alpha}\|_2$ using Theorems \ref{thm:cheeger}, \ref{thm:iso}, and \ref{thm:isogap}.

In light of the discussion in Section \ref{ssec:conductance}, we first show that $K_{p,\alpha}$, as a linear operator on $L_0^2(\varpi_p)$, is positive semi-definite.
Recall first that the Mtk $K_{p,\alpha}(\cdot,\cdot)$ is reversible with respect to $\varpi_p(\cdot)$.
This is equivalent to $K_{p,\alpha}$ being self-adjoint.
Next, note that, for $f \in L_0^2(\varpi_p)$ and $x \in \mathbb{R}^p$,
\[
\begin{aligned}
	K_{p,\alpha} f(x) &= \frac{1}{[2\pi (1- \alpha^2)]^{p/2}} \int_{\mathbb{R}^p} f(x')  \exp \left[ - \frac{1}{2(1-\alpha^2)} \| x' - \alpha x\|^2 \right] \df x' \\
	&= \frac{1}{[2\pi (1- \alpha)]^{p}} \int_{\mathbb{R}^p} f(x') \int_{\mathbb{R}^p}  \exp \left[  - \frac{\|x'-\sqrt{\alpha} x''\|^2}{2(1-\alpha)} - \frac{\|x''-\sqrt{\alpha} x\|^2}{2(1-\alpha)} \right] \df x'' \df x' \\
	&= K_{p,\sqrt{\alpha}}^2 f(x).
\end{aligned}
\]
Since $K_{p,\sqrt{\alpha}}(\cdot, \cdot)$ is also reversible with respect to $\varpi_p(\cdot)$, the corresponding operator is self-adjoint.
As a result, for $f \in L_0^2(\varpi_p)$,
\[
\langle f, K_{p,\alpha} f \rangle = \langle f, K_{p,\sqrt{\alpha}} K_{p,\sqrt{\alpha}} f \rangle = \langle K_{p,\sqrt{\alpha}} f, K_{p,\sqrt{\alpha}} f \rangle = \|K_{p,\sqrt{\alpha}} f\|_2^2 \geq 0.
\]
Hence, $K_{p,\alpha}$ is positive semi-definite.
By \eqref{eq:Knorm} and Cheeger's inequality (Theorem \ref{thm:cheeger}),
\[
\|K_{p,\alpha}\|_2 = 1 - G(K_{p,\alpha}) \leq 1 - \Phi_{K_{p,\alpha}}^2/8.
\]

We now bound $\Phi_{K_{p,\alpha}}$ from below.
We may pretend that the only thing we know about $\varpi_p(\cdot)$ is the following: It has a density function of the form $e^{-h_p(x)}$, where $\nabla^2 h_p(x) - I_p$ is positive semi-definite for $x \in \mathbb{R}^p$.
($\nabla^2 h_p(x)$ denotes the Hessian matrix of $h_p$.)
Then, by Theorem \ref{thm:iso}, $\varpi_p(\cdot)$ satisfies a three-set isoperimetric inequality of Cheeger type with an arbitrary positive $\delta$ and $\kappa = 4 \delta f_N(\delta)$.
On the other hand, by \eqref{eq:tv} and \eqref{ine:normaltv}, for $x, y \in \mathbb{R}^p$,
\[
\begin{aligned}
	\| \delta_x K - \delta_y K \|_{\tv} = 1 - \int_{\mathbb{R}^p} \min\{ k_{p,\alpha}(x, \df x') , k_{p,\alpha}(y, \df x') \} \, \df x' = 1 - 2 F_N \left( - \frac{\alpha \|x-y\|}{ 2 \sqrt{1-\alpha^2} } \right),
\end{aligned}
\]
where $k_{p,\alpha}(x,\cdot)$ is the density function of $K_{p,\alpha}(x,\cdot)$.
Applying the bound in Theorem \ref{thm:isogap} with $a = 1/2$ and $\delta = \sqrt{1-\alpha^2}/\alpha$ yields
\[
\begin{aligned}
	\Phi_{K_{p,\alpha}} &\geq \min \left\{ \frac{1}{4}, \frac{ \delta f_N(\delta)  }{4} \right\} \times 2 F_N \left( - \frac{1}{2} \right)  = \frac{\sqrt{1-\alpha^2}}{2\sqrt{2\pi} \alpha} \exp \left( - \frac{1-\alpha^2}{2\alpha^2} \right) F_N \left( - \frac{1}{2} \right).
\end{aligned}
\]
Thus,
\[
\|K_{p,\alpha}\|_2 \leq 1 - \frac{1 - \alpha^2}{64 \pi \alpha^2} \exp \left( - \frac{1-\alpha^2}{\alpha^2} \right) F_N \left( - \frac{1}{2} \right)^2.
\]
Recall that, in truth, $\|K_{p,\alpha}\|_2 = \alpha$.
The bound correctly indicates that $\|K_{p,\alpha}\|_2$ is bounded away from unity as $p \to \infty$, and that $(1 - \|K_{p,\alpha}\|_2)/(1 - \alpha)$ is bounded away from zero as $\alpha \to 1$.

\subsection{Lower bounds on $\|K\|_2$} \label{ssec:lower}

Let us consider the problem of bounding $\|K\|_2$ from below, which, by Theorem~\ref{thm:l2-2}, would quantify how slowly the chain converges when $K(\cdot,\cdot)$ is reversible.
The problem is not as frequently studied as that of bounding $\|K\|_2$ from above, but it has been examined in the context of some important MCMC algorithms \citep{johndrow2018mcmc,chewi2021optimal,wu2022minimax,andrieu2022explicit}. 
We provide a concise overview of some of the basic techniques employed in these studies.
Although this problem is most meaningful when $K(\cdot,\cdot)$ is reversible, the results we present would not require the assumption of reversibility.

Let $f \in L^2(\varpi)$ be a non-constant function.
Then $f - \varpi f \in L_0^2(\varpi)$, and $f - \varpi f \neq 0$.
By the Cauchy-Schwarz inequality and the definition of $\|K\|_2$,
\begin{equation} \nonumber
\langle f - \varpi f, K(f - \varpi f) \rangle \leq \|f - \varpi f\|_2 \, \|K(f - \varpi f)\|_2 \leq \|K\|_2 \|f - \varpi f\|_2^2.
\end{equation}

This yields the following bound:
\begin{theorem} \label{thm:lower}
	Let $f \in L^2(\varpi)$ be a non-constant function.
	Then
	\[
	\|K\|_2 \geq \frac{\langle f - \varpi f, K(f - \varpi f) \rangle}{\|f - \varpi f\|_2^2} = 1 - \frac{\int_{\X^2} [f(x) - f(x')]^2 K(x, \df x') \, \varpi(\df x) }{ 2\|f - \varpi f\|_2^2 }.
	\]
\end{theorem}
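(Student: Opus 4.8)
The plan is to treat the two assertions separately: the inequality bounding $\|K\|_2$ from below, and the equality rewriting the inner product as an integral of a squared difference.

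For the inequality, essentially all the work has already been carried out in the display immediately preceding the statement. Since $f$ is non-constant, $g := f - \varpi f$ is a nonzero element of $L_0^2(\varpi)$, so $\|g\|_2^2 > 0$. By the Cauchy-Schwarz inequality and the definition of the operator norm, $\langle g, Kg \rangle \le \|g\|_2 \, \|Kg\|_2 \le \|K\|_2 \|g\|_2^2$. Dividing through by $\|g\|_2^2 > 0$ gives $\|K\|_2 \ge \langle g, Kg \rangle / \|g\|_2^2$, which is the first claim.

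For the equality, I would reuse the computation that already appears in \eqref{eq:cheeger-0}. Writing $g = f - \varpi f$, I first note that $f(x) - f(x') = g(x) - g(x')$, since the additive constant $\varpi f$ cancels in the difference; hence the integral appearing in the statement is unchanged when $f$ is replaced by $g$. Expanding the square as $[g(x) - g(x')]^2 = g(x)^2 - 2 g(x) g(x') + g(x')^2$ and integrating each term against $K(x, \df x') \, \varpi(\df x)$, I would invoke three facts from Section \ref{ssec:l2basic}: that $K(x, \X) = 1$, that $K$ is stationary so $\varpi K = \varpi$, and the definition $Kg(x) = \int_{\X} g(x') \, K(x, \df x')$. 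The first and third terms then each contribute $\|g\|_2^2$, while the cross term contributes $-2 \langle g, Kg \rangle$, yielding
\[
\int_{\X^2} [g(x) - g(x')]^2 \, K(x, \df x') \, \varpi(\df x) = 2 \|g\|_2^2 - 2 \langle g, Kg \rangle.
\]
Rearranging and dividing by $2\|g\|_2^2$ produces the stated equality.

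There is no substantive obstacle here. The only step requiring a moment's care is the $g(x')^2$ term, where stationarity is needed to rewrite $\int_{\X^2} g(x')^2 \, K(x, \df x') \, \varpi(\df x)$ as $\int_{\X} g(x')^2 \, \varpi(\df x') = \|g\|_2^2$; the $g(x)^2$ term and the cross term follow directly from $K$ being a probability kernel and from the definition of the operator $K$, respectively.
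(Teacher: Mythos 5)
Your proposal is correct and takes essentially the same approach as the paper: the inequality is precisely the Cauchy--Schwarz/operator-norm display that immediately precedes the theorem, and the equality is the same polarization identity the paper invokes without detail in \eqref{eq:cheeger-0}, which you fill in correctly via $K(x,\X)=1$, stationarity $\varpi K = \varpi$, and the definition of $Kg$.
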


Theorem~\ref{thm:lower} can be directly applied to scenarios where $\varpi(\cdot)$ has a simple form.
Consider the Gaussian chain in Example \ref{ex:gaussian}.
For $x = (x[1], \dots, x[p]) \in \mathbb{R}^p$, let $f(x) = x[1]$.
Then $\varpi_p f = 0$, $\|f - \varpi_p f \|_2^2 = 1$, and
\[
\langle f - \varpi_p f, K_{p,\alpha}(f - \varpi_p f) \rangle = \langle f, \alpha f \rangle = \alpha.
\]
By Theorem~\ref{thm:lower}, $\|K_{p,\alpha}\|_2 \geq \alpha$.
Recall that $\alpha$ is in fact the true value of $\|K_{p,\alpha}\|_2$.

Letting $f$ be an indicator function in Theorem~\ref{thm:lower} gives the following (after a bit of calculations), which is very similar to parts of Theorem~\ref{thm:cheeger}.
\begin{corollary} \label{cor:lower-A}
	Let $A \in \B$ be such that $\varpi(A) \in (0,1)$.
	Then
	\begin{equation} \label{ine:lower-A}
	\|K\|_2 \geq 1 - \frac{\int_A \varpi(\df x) K(x, A^c) }{\varpi(A) \varpi(A^c)}.
	\end{equation}
\end{corollary}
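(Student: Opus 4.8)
The plan is to invoke Theorem~\ref{thm:lower} with the single test function $f = \ind_A$. Because $\varpi(A) \in (0,1)$, this $f$ is bounded (hence lies in $L^2(\varpi)$) and is non-constant in $L^2(\varpi)$, so the hypotheses of Theorem~\ref{thm:lower} are met and the inequality
\[
\|K\|_2 \geq 1 - \frac{\int_{\X^2} [f(x) - f(x')]^2 \, K(x,\df x') \, \varpi(\df x)}{2\|f - \varpi f\|_2^2}
\]
is available. It then remains only to evaluate the two integrals appearing on the right.

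First I would compute the denominator. Since $\varpi f = \varpi(A)$, the quantity $\|f - \varpi f\|_2^2$ is exactly the variance of $\ind_A$ under $\varpi$, namely $\varpi(A) - \varpi(A)^2 = \varpi(A)\varpi(A^c)$, which already matches the denominator in \eqref{ine:lower-A}.

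Next I would handle the numerator. The key elementary observation is that $[\ind_A(x) - \ind_A(x')]^2$ equals $1$ precisely when exactly one of $x$ and $x'$ lies in $A$, so it splits as $\ind_A(x)\ind_{A^c}(x') + \ind_{A^c}(x)\ind_A(x')$. Integrating term by term against $K(x,\df x') \, \varpi(\df x)$ then gives $\int_A \varpi(\df x) K(x, A^c) + \int_{A^c} \varpi(\df x) K(x, A)$.

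The only step requiring more than bookkeeping---the ``bit of calculations'' hinted at---is recognizing that these two cross terms are equal. This is precisely the stationarity identity $\int_A \varpi(\df x) K(x, A^c) = \int_{A^c} \varpi(\df x) K(x, A)$ established in Case~(ii) of the proof of Theorem~\ref{thm:isogap}, which rests only on $\varpi K = \varpi$. Consequently the numerator equals $2\int_A \varpi(\df x) K(x, A^c)$; substituting this together with the denominator into the bound from Theorem~\ref{thm:lower} cancels the factors of $2$ and yields \eqref{ine:lower-A}. Note that reversibility is nowhere used, the only property of $K$ invoked being $\varpi K = \varpi$.
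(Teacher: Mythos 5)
Your proposal is correct and follows exactly the route the paper intends: the paper's proof is precisely to take $f = \ind_A$ in Theorem~\ref{thm:lower}, and the ``bit of calculations'' it alludes to are the ones you carried out (the variance $\varpi(A)\varpi(A^c)$ in the denominator, the splitting of $[\ind_A(x)-\ind_A(x')]^2$ into the two cross terms, and the stationarity identity $\int_A \varpi(\df x) K(x,A^c) = \int_{A^c} \varpi(\df x) K(x,A)$ that merges them). Your closing remark that only $\varpi K = \varpi$, not reversibility, is used is also consistent with the paper's statement that these lower-bound results do not require reversibility.
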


Finally, from Corollary~\ref{cor:lower-A} we can immediately derive the result below.

\begin{corollary} \label{cor:lower-A-1}
	Suppose that there exist $A \in \B$ and $\delta \leq 1$ such that $K(x,\{x\}^c) \leq \delta$ for $x \in A$.
	Then, if $\varpi(A) \in (0,1)$,
	\[
	\|K\|_2 \geq 1 - \frac{\delta}{\varpi(A^c)}.
	\]
\end{corollary}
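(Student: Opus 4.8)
The plan is to apply Corollary~\ref{cor:lower-A} directly, so that the entire task reduces to bounding the numerator $\int_A \varpi(\df x) K(x, A^c)$ appearing in \eqref{ine:lower-A} from above by $\delta \varpi(A)$.

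The key observation I would isolate first is a set-theoretic one: whenever $x \in A$, the point $x$ lies in $A$ and hence not in $A^c$, so $A^c \subseteq \{x\}^c$. Since $K(x, \cdot)$ is a probability measure on $\B$, monotonicity gives $K(x, A^c) \leq K(x, \{x\}^c)$, and by the hypothesis $K(x, \{x\}^c) \leq \delta$ for $x \in A$, we obtain the pointwise bound $K(x, A^c) \leq \delta$ valid for every $x \in A$.

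Next I would integrate this pointwise bound over $A$ against $\varpi$, yielding $\int_A \varpi(\df x) K(x, A^c) \leq \delta \int_A \varpi(\df x) = \delta \varpi(A)$. Dividing through by $\varpi(A)\varpi(A^c)$, which is strictly positive because $\varpi(A) \in (0,1)$, bounds the fraction in \eqref{ine:lower-A}:
\[
\frac{\int_A \varpi(\df x) K(x, A^c)}{\varpi(A)\varpi(A^c)} \leq \frac{\delta \varpi(A)}{\varpi(A)\varpi(A^c)} = \frac{\delta}{\varpi(A^c)}.
\]
Substituting this into the conclusion of Corollary~\ref{cor:lower-A} gives $\|K\|_2 \geq 1 - \delta/\varpi(A^c)$, as claimed.

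There is no real obstacle here, which is consistent with the statement that the result follows immediately from Corollary~\ref{cor:lower-A}. The only step deserving a moment's care is the inclusion $A^c \subseteq \{x\}^c$ for $x \in A$: this is precisely what converts the hypothesis on the holding behavior (a bound on the probability $K(x, \{x\}^c)$ of leaving the current point) into a bound on the escape probability $K(x, A^c)$ out of the set $A$, which is the quantity that actually enters the conductance-type lower bound.
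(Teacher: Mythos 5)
Your proof is correct and is precisely the derivation the paper intends: the paper offers no separate proof of this corollary, stating only that it follows immediately from Corollary~\ref{cor:lower-A}, and your chain $K(x,A^c) \leq K(x,\{x\}^c) \leq \delta$ for $x \in A$ (via the inclusion $A^c \subseteq \{x\}^c$), integrated over $A$ and divided by $\varpi(A)\varpi(A^c) > 0$, is exactly that immediate derivation.
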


By Corollary \ref{cor:lower-A-1}, to obtain a large lower bound on $\|K\|_2$, we only need to find a set $A$ such that $\varpi(A)$ is small, and that $K(x,\{x\})$ is large when $x \in A$.
Oftentimes, we can take $A$ to be an arbitrarily small neighborhood around a point $x_0$ where $K(x_0, \{x_0\})$ is large.

We my apply Corollary \ref{cor:lower-A-1} to the independent Metropolis Hastings chain from Example \ref{ex:IMH}.
Recall that $s: [0,1] \to (0,\infty)$ is continuous, and $M_s = \sup_{x \in [0,1]} s(x) < \infty$.
Then, for $\varepsilon > 0$, one can find an interval $A_{\varepsilon} = (a_{\varepsilon}, b_{\varepsilon}) \subset [0,1]$ such that $0 < b_{\varepsilon} - a_{\varepsilon} < \varepsilon$, and that $s(x) > M_s - \varepsilon$ for $x \in  (a_{\varepsilon}, b_{\varepsilon})$.
Then, whenever $\varepsilon \in (0, M_s)$, it holds that $\pi_s(A_{\varepsilon}) \leq \varepsilon M_s$, and, for $x \in A_{\varepsilon}$,
\[
K_s(x, \{x\}^c) = \int_0^1 a_s(x,x') \, \df x' = \int_0^1 \min\left\{ 1, \frac{s(x')}{s(x)} \right\} \, \df x' \leq \int_0^1 \frac{s(x')}{M_s - \varepsilon} \, \df x' = \frac{1}{M_s - \varepsilon}.
\]
Hence, by Corollary \ref{cor:lower-A-1}, if $\varepsilon < M_s$ and $\varepsilon < 1/M_s$,
\[
\|K_s\|_2 \geq 1 - \frac{1}{(M_s - \varepsilon)(1 - \varepsilon M_s)}.
\]
Since $\varepsilon$ can be arbitrarily small, we have the bound $\|K_s\|_2 \geq 1-1/M_s$.
In Section \ref{ssec:l2basic}, it was shown that $\|K_s\| \leq 1 - 1/M_s$.
Thus, we may conclude that $\|K_s\|_2 = 1 - 1/M_s$.

Corollary~\ref{cor:lower-A-1} can be used to analyze Markov chains associated with general Metropolis Hastings algorithms.
See \cite{brown2022lower} for a detailed discussion on this topic.

There are also results showing the slowness of Markov chains outside the $L^2$ framework.
See, e.g., \cite{roberts2011quantitative}, \cite{wang2022exact}, \cite{brown2022lower}.

\subsubsection{Example: a random walk Metropolis Hastings algorithm}

We now illustrate Theorem~\ref{thm:lower} and Corollary~\ref{cor:lower-A-1} through a semi-toy example, which is a simplified version of a study from \cite{andrieu2022explicit}.

Consider a random walk Metropolis Hastings (RWMH) algorithm on $\mathbb{R}^p$ targeting the $p$-dimensional standard normal distribution, which we denote by $\varpi_p(\cdot)$.
Let $\sigma$ be a positive constant.
Given the current state $x \in \mathbb{R}^p$, the RWMH algorithm proceeds as follows.
Draw $X'$ from the $\mbox{N}_p(x, \sigma^2 I_p )$ distribution, and call its realization $x'$.
Let 
\[
a(x,x') = \min\left\{ 1, \frac{\exp( -\|x'\|^2/2) }{ \exp( -\|x\|^2/2 )} \right\}.
\]
With probability $a(x,x')$, set the next state to $x'$, and with probability $1 - a(x,x')$, set the next state to~$x$.
The underlying Markov chain is reversible with respect to $\varpi_p(\cdot)$.
Denote the Mtk of this algorithm by $T_{p,\sigma}$.
Then, for $x \in \mathbb{R}^p$ and $A \in \B$,
\[
\begin{aligned}
	T_{p,\sigma}(x,A) =& \int_A \frac{1}{(2\pi \sigma^2)^{p/2}} \exp \left( - \frac{\|x'-x\|^2}{2 \sigma^2} \right) a(x,x') \, \df x' +  \\
	& \int_{\mathbb{R}^p} \frac{1}{(2\pi \sigma^2)^{p/2}} \exp \left( - \frac{\|x'-x\|^2}{2 \sigma^2} \right) [1 - a(x,x')] \, \df x' \, \ind_{x \in A}.
\end{aligned}
\]

We now apply Theorem~\ref{thm:lower} to bound $\|T_{p,\sigma}\|_2$ from below.
For $x = (x[1], \dots, x[p]) \in \mathbb{R}^p$, let $f(x) = x[1]$.
Then $\|f - \varpi_p f\|_2^2 = 1$.
Moreover, for $x \in \mathbb{R}^p$,
\[
\begin{aligned}
	\int_{\mathbb{R}^p} [f(x) - f(x')]^2 \, T_{p,\sigma}(x, \df x') &= \int_{\mathbb{R}^p}  \frac{[f(x) - f(x')]^2}{(2\pi \sigma^2)^{p/2}} \exp \left( - \frac{\|x'-x\|^2}{2 \sigma^2} \right) a(x,x') \, \df x' \\
	&\leq \int_{\mathbb{R}^p}  \frac{[f(x) - f(x')]^2}{(2\pi \sigma^2)^{p/2}} \exp \left( - \frac{\|x'-x\|^2}{2 \sigma^2} \right) \df x' = \sigma^2.
\end{aligned}
\]
By Theorem~\ref{thm:lower}, $\|T_{p,\sigma}\|_2 \geq 1 - \sigma^2/2$.

Another lower bound on $\|T_{p,\sigma}\|_2$ can be obtained through Corollary~\ref{cor:lower-A-1}.
Elementary calculations show that
\[
\begin{aligned}
	T_{p,\sigma}(0, \{0\}^c) &= \int_{\mathbb{R}^p} \frac{1}{(2\pi \sigma^2)^{p/2}} \exp \left( - \frac{\|x'-0\|^2}{2 \sigma^2} \right) a(0,x') \, \df x' \\
	&= \frac{1}{(\sigma^2 + 1)^{p/2}}.
\end{aligned}
\]
Moreover, one can verify that $x \mapsto T_{p,\sigma}(x, \{x\}^c)$ is a continuous function.
Hence, there exists a sequence of open neighborhoods of 0, say, $A_1, A_2, \dots$, such that $\lim_{n \to \infty} \varpi_p(A_n) = 0$, and that 
\[
\lim_{n \to \infty} \sup_{x \in A_n} T_{p,\sigma}(x,\{x\}^c) = \frac{1}{(\sigma^2 + 1)^{p/2}}.
\]
By Corollary~\ref{cor:lower-A-1},
\[
\|T_{p,\sigma}\|_2 \geq 1 - \lim_{n \to \infty} \frac{\sup_{x \in A_n} T(x,\{x\}^c)}{1 - \varpi_p(A_n)} = 1 - \frac{1}{(\sigma^2+1)^{p/2}}.
\]

Combining the two bounds, we see that
\begin{equation} \label{ine:metro-lower}
\|T_{p,\sigma}\|_2 \geq 1 - \min \left\{ \frac{\sigma^2}{2}, \frac{1}{(\sigma^2+1)^{p/2}} \right\}.
\end{equation}

In practice, we may tune $\sigma$ in the hopes of making $\|T_{p,\sigma}\|_2$ small.
But how small can $\|T_{p,\sigma}\|_2$ go?
We can partially answer this by minimizing the lower bound in \eqref{ine:metro-lower}.
The bound is minimized when $\sigma = \sigma_p$, where $\sigma_p$ satisfies
\[
\frac{\sigma_p^2}{2} = \frac{1}{(\sigma_p^2+1)^{p/2}}.
\]
It can be shown that, when $p$ is sufficiently large, $1/p \leq \sigma_p^2 \leq 2 (\log p) / p$, and
\[
1 - \min \left\{ \frac{\sigma_p^2}{2}, \frac{1}{(\sigma_p^2+1)^{p/2}} \right\} \geq 1 - \frac{\log p}{p}.
\]
This means that, regardless of how $\sigma$ is chosen, $\|T_{p,\sigma}\|_2$ is always lower bounded by $1 - (\log p)/p$ for large values of~$p$.

When $\sigma^2 = 1/p$, \eqref{ine:metro-lower} implies that $1 - \|T_{p,\sigma}\|_2 \leq  1/(2p)$.
In this case, the bound gives the correct order when $p \to \infty$.
Indeed, using isoperimetric inequalities, it is possible to establish a lower bound on $1 - \|T_{p,\sigma}\|_2$ that is also of the order $1/p$ when $\sigma^2 = 1/p$.
See \cite{andrieu2022explicit}, who studied RWMH algorithms targeting general distributions with strongly log-concave densities.

\section{Other methods} \label{sec:others}

We end this chapter by listing some other important methods for constructing convergence bounds.

The canonical path technique is a powerful tool for analyzing Markov chains taking values in a discrete state space \citep{diaconis1991geometric,sinclair1992improved,yang2016computational}.

The transition law of a Markov chain can be written into a random function, and convergence bounds may be formed by studying the local contractive behavior of this function \citep{steinsaltz1999locally,jarner2001locally,qin2020wasserstein,qu2023computable}.

The convergence properties of a Markov chain with a complicated transition law can be studied by comparing it to a simpler Markov chain or process \citep{jones2014convergence,latuszynski2014convergence,pillai2014ergodicity,dalalyan2017theoretical,andrieu2018uniform,rudolf2018perturbation,ascolani2023dimensionfree}.
In particular, the optimal scaling framework provides a unique perspective for studying the properties of a high-dimensional Metropolis-Hastings algorithm by relating it to a certain diffusion process \citep{gelman1997weak,atchade2011towards,pillai2012optimal,yang2020optimal}.

One can also decompose an intricate transition law into simpler components \citep{madras2002markov,jerrum2004elementary,guan2007small,woodard2009conditions,ge2018simulated,qin2023spectral}.
Related to this approach, techniques based on spectral independence and stochastic localization have recently received an increasing amount of attention \citep{anari2021spectral,chen2021optimal,chen2021rapid,chen2022localization,feng2022rapid,qin2022spectral}.

Finally, some MCMC algorithms can be conceptualized as certain deterministic optimization algorithms over a space of distributions.
These algorithms can be analyzed using the theory of gradient flows.
See \cite{cheng2018convergence}, \cite{durmus2019analysis}, and references therein.

%
%
%
%
%


\section*{Acknowledgments}
This work was partially supported by the NSF.
The author thanks Austin Brown, Galin L. Jones, and Youngwoo Kwon for their helpful comments.

\bibliographystyle{apalike} 
\bibliography{handbook-QQ}

\end{document}